\newcommand{\arr}{\mathcal{A}}
\newtheorem{cor}{Corollary}
\definecolor{darkblue}{rgb}{0,0,0.7} % darkblue color
\newcommand{\defn}[1]{\textsl{#1}} % emphasis of a definition
\newtheorem{conj}{Conjecture}
\newtheorem{definition}{Definition}
\newtheorem{lemma}{Lemma}
\newtheorem{theorem}{Theorem}
\newtheorem{remark}{Remark}
\begin{document}

\title{On Triangles in Colored Pseudoline Arrangements}
\author{
    Yan Alves Radtke\,\thanks{Institut für Mathematik, Technische Universität Berlin, Germany. alves@math.tu-berlin.de} \and Bal\'azs Keszegh\,\thanks{HUN-REN Alfréd Rényi Institute of Mathematics and ELTE Eötvös Loránd University, Budapest, Hungary. keszegh@renyi.hu.\\ Research supported by the ERC Advanced Grant ``ERMiD'' and by the EXCELLENCE-24 project no.~151504 Combinatorics and Geometry of the NRDI Fund. This research has been implemented with the support provided by the Ministry of Innovation and Technology of Hungary from the National Research, Development and Innovation Fund, financed under the  ELTE TKP 2021-NKTA-62 funding scheme.} \and Robert Lauff\, \thanks{Institut für Mathematik, Technische Universität Berlin, Germany. lauff@math.tu-berlin.de}
}

\date{}

\maketitle

\begin{abstract}
    We consider the faces in pseudoline arrangements in which the pseudolines are colored with two colors. Björner, Las Vergnas, Sturmfels,  White, and Ziegler conjecture the existence of a two-colored triangle in such arrangements. We consider variants of this problem. We show that in any non-trivial two-coloring of a pseudoline arrangement there exists a two-colored triangle or quadrangle. 
    We also investigate the existence of a bichromatic triangle assuming certain structures on the coloring.
   
    Previously, several authors investigated the chromatic number and independence number of hypergraphs whose vertices correspond to the pseudolines of an arrangement and the hyperedges correspond to the faces of the arrangement. We show that the maximum of the independence numbers of such hypergraphs is $\lceil \frac{2}{3}n-1\rceil$. We also prove that if we only consider the triangular faces then this maximum becomes $n-\Theta(\log n)$.
\end{abstract}

\section{Introduction}

An \defn{Euclidean pseudoline arrangement} is a finite collection of bi-infinite, simple curves called \defn{pseudolines} in the Euclidean plane, such that they pairwise cross in exactly one point, which we will call \defn{crossing}. 

An arrangement is \defn{simple}, if no three pseudolines intersect in a common point. We only consider simple arrangements.

A pseudoline arrangement gives rise to a collection of \defn{vertices}, \defn{edges} and \defn{faces}, see Figure~\ref{fig:plaflip}.
We call a bounded face a \defn{triangle} resp. \defn{quadrangle}, if it is supported by exactly three resp. four pseudolines. 
Sometimes we consider the area between three pseudolines and call it a \defn{non-empty triangle}.

The minimum  and maximum  number of triangles in pseudoline arrangements are known\cite{minnumbertriangles,maxnumbertriangles}.
Other questions about triangles remain open. 
We will consider arrangements whose pseudolines are colored blue and red so that at least one pseudoline of each color exists. We call such arrangements \defn{bicolored}. In 1993, Björner, Las Vergnas, Sturmfels, White, and Ziegler asked about the existence of bichromatic triangles in bicolored arrangements.
\begin{conj}[{\cite[p. 280]{orientedmatroids}}]\label{conj:bichromatic triangle}
	Every bicolored arrangement has a bichromatic triangle.
\end{conj}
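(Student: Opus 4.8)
The plan is to attack the conjecture through a reformulation in terms of crossings. Call a crossing \emph{bichromatic} if its two pseudolines have different colors, and \emph{monochromatic} otherwise. The first observation is that a bicolored arrangement has a bichromatic triangle if and only if some bichromatic crossing is a vertex of a triangular face. Indeed, if a bichromatic crossing $\ell_i\cap\ell_j$ is a vertex of a triangle $T$, then $\ell_i$ and $\ell_j$ are two of the three pseudolines supporting $T$, so $T$ is bichromatic; conversely, a bichromatic triangle has three supporting pseudolines that are not all the same color, hence some two of them differ in color and their crossing is a bichromatic vertex of the triangle. (In fact every bichromatic triangle has at least two bichromatic vertices, since the colorings $RRB$ and $RBB$ each produce two bichromatic pairs.) It therefore suffices to rule out the extremal situation in which \emph{every} triangular face is monochromatic, equivalently, in which no bichromatic crossing is ever a triangle vertex.

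Assuming for contradiction that all triangles are monochromatic, the next step is to localize around the interface between the two colors. Fix a red pseudoline $r$ and a blue pseudoline $b$ and consider their crossing $p$. Of the four cells meeting at $p$, any triangle among them would be bichromatic, since its boundary uses both $r$ and $b$; so each of these four cells must have at least four sides. One then wants to combine this local obstruction with a Levi-type local abundance of triangles (each pseudoline is incident to several triangular faces) and an extremal choice of $p$. A natural attempt is to take $p$ to be the leftmost bichromatic crossing after a generic rotation. Here one must be careful: if $p$ is leftmost, then no pseudoline can cross both $r$ and $b$ strictly to the left of $p$ (otherwise a red line meeting $b$, or a blue line meeting $r$, would yield a bichromatic crossing left of $p$), which tends to make the leftward cell at $p$ \emph{unbounded} rather than a triangle. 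Thus the triangle we seek is generally not incident to the extreme crossing itself, and the real work is to track how the monochromatic triangles guaranteed along $r$ and $b$ interact with the color interface.

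If the direct extremal argument proves too rigid, the fallback is an inductive scheme based on mutations (flips across triangular cells), which preserve the coloring of the pseudolines: one flips monochromatic triangles in an attempt to reduce any hypothetical all-monochromatic arrangement to a small base case or to a near-pencil, where the claim is checked by hand. The main obstacle—and the reason this is still only a conjecture—is exactly the global interaction of the two color classes. Each class alone may already generate $\Theta(n^3)$ monochromatic triangular faces, so naive counting against the known lower bound of order $n$ on the total number of triangles yields nothing; and the interface can be arranged so that the cells incident to \emph{every} bichromatic crossing are quadrangles rather than triangles. This is precisely the configuration that defeats the extremal step and that one can only resolve up to the weaker ``triangle or quadrangle'' guarantee; closing the gap between that guarantee and a genuine bichromatic triangle is the crux of the problem.
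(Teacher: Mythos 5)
Your proposal is not a proof, and your own final sentence concedes this (``closing the gap \dots is the crux of the problem''). The statement you were asked to prove is stated in the paper as an open conjecture of Bj\"orner, Las Vergnas, Sturmfels, White, and Ziegler; the paper does not prove it either, and only establishes weaker variants: at most $5$ red pseudolines, block-bicolored arrangements, and the existence of a bichromatic triangle \emph{or quadrangle}. The only content your text actually establishes is the easy equivalence in your first paragraph (a triangle is bichromatic iff one of its vertices is a bichromatic crossing). Everything after that is a description of two strategies together with the reasons they fail: you yourself observe that the leftmost-bichromatic-crossing argument breaks because the cell on the extreme side of that crossing is typically unbounded rather than triangular, and the flip-based induction is never specified (what gets flipped, what quantity decreases, what the base case is). Neither strategy is carried to a contradiction, so the hypothesis ``all triangles are monochromatic'' is never refuted, and there is a genuine gap --- indeed the entire argument is missing.

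Two further concrete points. First, your counting remark is wrong as stated: an arrangement of $n$ pseudolines has only $O(n^2)$ faces, so a color class cannot generate ``$\Theta(n^3)$ monochromatic triangular faces''; the cubic count applies to triples of pseudolines bounding a possibly non-empty triangular region, not to triangular faces. Second, your fallback position is essentially the paper's Theorem~\ref{thm:always tri or quadrangle}, which is proved there by a different route: sweep inside a blue-red-red triangle of the sub-arrangement formed by the red lines and one blue line, merge the two red sides into a single curve to form a lens, and apply a lens-sweeping lemma; the quadrangle that can appear is exactly the red-red-blue-blue configuration you predict at bichromatic crossings. If you want to make your flip-based scheme precise, the cases the paper does settle are the natural testing ground: at most $5$ red lines (Levi-type sweeping plus a flip argument in the $5$-star case) and block-bicolored arrangements (where the single-step inclusion order on rank-$3$ signotopes yields a flip sequence from $\arr_{min}$ to $\arr_{max}$ consisting solely of bichromatic triangle flips).
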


Another way to look at this problem is to consider the \defn{triangle-pseudoline incidence graph}.
As its vertices, take the triangles and pseudolines and add an edge between a pseudoline and a triangle if the pseudoline supports the triangle. 
It is intuitive to ask about the connectivity of this graph.

\begin{conj}[{\cite[p. 278]{orientedmatroids}}]\label{conj:connected_incidence graph}
    The triangle-pseudoline incidence graph of a pseudoline arrangement is connected. \footnote{The original (weaker) conjecture concerned projective arrangements.}
\end{conj}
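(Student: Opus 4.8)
The plan is to first recast the statement in purely combinatorial terms. Since every triangle is supported by exactly three pseudolines, any two of those three lie at distance two in the incidence graph (joined through their common triangle), and each triangle is adjacent to its three supporting pseudolines. Hence the incidence graph is connected if and only if two conditions hold: (i) every pseudoline supports at least one triangle, and (ii) the auxiliary graph $G$ on the $n$ pseudolines, in which two pseudolines are adjacent precisely when they co-support a common triangle, is connected. Condition (i) is a classical fact about simple arrangements (every pseudoline of a simple arrangement of at least three pseudolines bounds a triangle), and it moreover implies that no pseudoline is isolated in $G$, since the other two lines of any triangle it supports are $G$-adjacent to it. The problem therefore reduces to showing that $G$ is connected.

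Before attacking $G$ directly, I would record a reformulation linking the statement to Conjecture~\ref{conj:bichromatic triangle}. Suppose $G$ were disconnected; take one connected component, color its pseudolines red and all remaining pseudolines blue. Both classes are nonempty, so the arrangement is bicolored, and every triangle has its three supporting pseudolines pairwise $G$-adjacent, hence all in one component, hence monochromatic. Thus the arrangement would have no bichromatic triangle, contradicting Conjecture~\ref{conj:bichromatic triangle}. This shows that the bichromatic-triangle conjecture implies the present connectivity statement, exhibiting the latter as a (presumably more accessible) weakening; a direct, unconditional proof of connectivity is the goal.

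For the direct proof I would use induction on $n$ via pseudoline insertion. Realizing $\arr$ as a wiring diagram, let $\ell$ be the last wire, so that $\arr' = \arr \setminus \ell$ is a simple arrangement of $n-1$ pseudolines whose graph $G(\arr')$ is connected by induction. Reinserting $\ell$ affects $G$ in two ways: it creates new triangles along $\ell$ (attaching $\ell$ to $G$ and supplying condition (i) for $\ell$), and it may cut existing triangles of $\arr'$. The key local observation is that when $\ell$ slices a triangle $T$ of $\arr'$ with sides on $\ell_i, \ell_j, \ell_k$, it enters and leaves through two of the three sides, so it always produces a smaller triangle bounded by $\ell$ together with the two lines whose sides it crosses; that pair therefore remains adjacent in $G(\arr)$ and becomes linked to $\ell$ as well.

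The main obstacle is precisely the third line $\ell_k$ of each cut triangle: the edges $\{\ell_i,\ell_k\}$ and $\{\ell_j,\ell_k\}$ contributed by $T$ are destroyed, and if $T$ was the unique triangle witnessing $\ell_k$'s link to the rest of $G$, the insertion of $\ell$ could a priori disconnect the graph. The crux is thus a \emph{rerouting} lemma guaranteeing that whenever $\ell$ severs a connection it simultaneously provides an alternative path, for example by showing that the quadrilateral split off from $T$ is itself adjacent to a triangle reusing $\ell_k$, or via a global charging argument over all triangles that $\ell$ cuts. Establishing this rerouting in full generality is where the difficulty concentrates, and it is the reason the statement is still only conjectural; I would approach it through a left-to-right sweep along $\ell$, maintaining connectivity of $G$ as an invariant across each successive crossing and verifying the base cases $n \le 4$ directly.
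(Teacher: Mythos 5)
You were asked about Conjecture~\ref{conj:connected_incidence graph}, which the paper does not prove: it is stated as an open conjecture, and the only thing the paper says about it is that it is equivalent to Conjecture~\ref{conj:bichromatic triangle}. So there is no proof in the paper to compare against, and the real question is whether your proposal closes the problem. It does not, and you say so yourself. Your preliminary reductions are correct and match what the paper implicitly uses: condition (i) (every pseudoline supports a triangle) follows from Lemma~\ref{lemma:sweepingpla}, the reduction of connectivity of the incidence graph to connectivity of the auxiliary graph $G$ is sound, and your coloring argument correctly shows that Conjecture~\ref{conj:bichromatic triangle} implies Conjecture~\ref{conj:connected_incidence graph} (the converse also holds, by essentially the reverse of the same argument, which is why the paper calls them equivalent). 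Your local analysis of what happens when a pseudoline $\ell$ slices a triangle $T$ of $\arr'$ is also correct: the piece of $T$ on the side of the $(\ell_i,\ell_j)$-vertex is a genuine triangular face of $\arr$, so the pair $\{\ell_i,\ell_j\}$ survives in $G(\arr)$ and $\ell$ gets attached.

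The genuine gap is exactly the ``rerouting lemma'' you name: after $\ell$ cuts $T$, the edges $\{\ell_i,\ell_k\}$ and $\{\ell_j,\ell_k\}$ may disappear, and nothing in your argument prevents $\ell_k$ (or a whole cluster of pseudolines whose only links ran through triangles cut by $\ell$) from becoming separated. This is not a technical detail that a sweep or charging argument can be expected to patch routinely: by the equivalence you yourself establish, any insertion-based lemma that always preserves connectivity of $G$ would prove Conjecture~\ref{conj:bichromatic triangle} in full generality, so the missing step is precisely as hard as the original open problem. This is also consistent with where progress actually stands in the paper --- the cases that are settled (straight lines via shifting, approaching arrangements \cite{approachingarrangements}, at most $5$ red pseudolines in Theorem~\ref{thm:fewreds}, block-bicolored arrangements in Theorem~\ref{thm:blockbicolored}) all exploit global structure (a continuous motion, or the signotope order via Theorem~\ref{thm:ssioandio}) rather than a local insertion argument, suggesting that purely local rerouting fails as a strategy. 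In short: your reductions are correct and useful framing, but the proposal is an outline of an approach whose central lemma is unproven, so the conjecture remains open.
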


Note that Conjecture~\ref{conj:bichromatic triangle} and Conjecture~\ref{conj:connected_incidence graph} are equivalent. 
The existence of bichromatic triangles in a bicolored straight line arrangement has a simple proof: Shift the blue sub-arrangement up and down. 
Consider the first moment the combinatorics of the arrangement changes. 
At this point, a blue-blue crossing moved over a red line or a red-red crossing moved over a blue line. 
The crossing and the line form a bichromatic triangle in the original arrangement.

A generalization of this result was given in \cite{approachingarrangements}:
An arrangement $\arr=\{\ell_1,\dots,\ell_n\}$ is \defn{approaching}, if every pseudoline $\ell_i$ is the graph of a function $f_i$, such that $f_i-f_j$ is strictly monotone increasing, for $i<j$. In \cite{approachingarrangements} they show that as these pseudolines can be shifted such that the collection of curves remains an arrangement of pseudolines, Conjecture~\ref{conj:bichromatic triangle} holds for all approaching arrangements.

The number of approaching arrangements has asymptotics similar to the class of all arrangements, but examples of small arrangements that are not isomorphic to an approaching arrangement are known.

Although the proof in the case of approaching arrangements is very elegant and intuitive, not much progress has been made towards the general case so far. 
We present results about easier variants of the question.

Also towards the goal of understanding triangles in pseudoline arrangements, we consider the \defn{(pseudo)line-triangle hypergraph}, which is the 3-uniform hypergraph whose vertices correspond to the (pseudo)lines and a triple forms a hyperedge if the corresponding (pseudo)lines form a triangle. 
Then Conjecture \ref{conj:bichromatic triangle} claims that every true 2-coloring of this hypergraph has a non-monochromatic hyperedge.
We present a theorem about the independence number of this hypergraph, i.e., we consider how large the discrepancy of the cardinalities of the two color classes in an arrangement can be while not including any monochromatic triangles.

\subsection{Preliminaries}
We will consider Euclidean arrangements as \defn{marked arrangements}, which have a fixed unbounded face, which we will call the \defn{north face}.
This gives us a canonical numbering of the pseudolines, after choosing the north face, see Figure~\ref{fig:plaflip}.
This induces
an orientation on every triples of lines: we say that a triple of pseudolines has "-" orientation, if the middle pseudoline goes above the crossing of the other two, otherwise it is $+$, see Figure~\ref{fig:plaflip}.

\begin{definition}[\cite{sweeps}]
The function $\sigma: \binom{[n]}{k}\rightarrow \{-,+\}$ is a rank $k$ signotope on $n$ elements, if for all $$S\coloneqq\{x_1<x_2<\dots<x_{k+1}\}\subseteq[n],$$
    the sequence $$\sigma(S\setminus x_1),\sigma(S\setminus x_2),\dots, \sigma(S\setminus x_{k+1})$$ has at most one sign change.
\end{definition}
\begin{figure}[ht]
    \centering
    \includegraphics[width=0.75\linewidth]{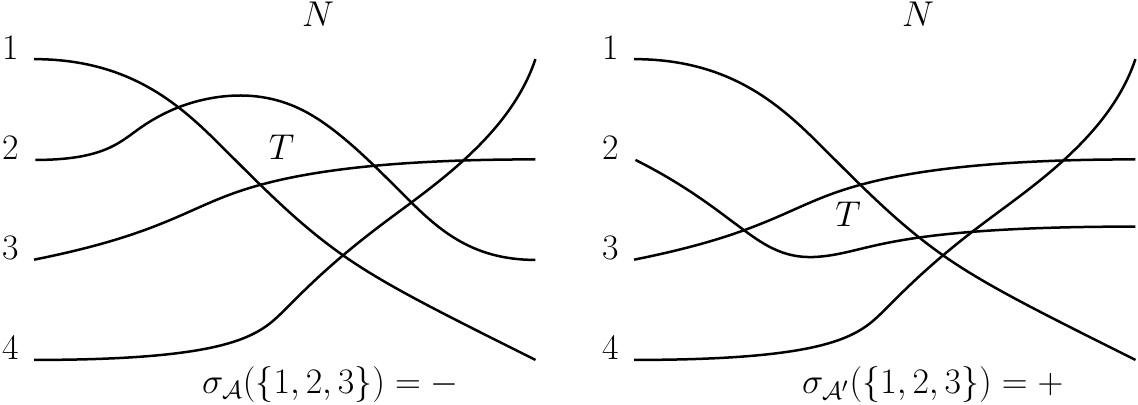}
    \caption{ The north face is designated by $N$. The face $T$ is a triangle. The crossings (1,2) and (1,3) are vertices, which are connected by an edge. The left arrangement corresponds to the all "$-$" signotope. The right arrangement is obtained by flipping $T$. }
    \label{fig:plaflip}
\end{figure}
The orientation of triples of pseudolines of $\arr$ give rise to the rank 3 signotope $\sigma_{\arr}$. Conversely, every rank 3 signotope can be seen as triangle orientations of a marked pseudoline arrangement \cite{sweeps}. It is possible to locally mutate a pseudoline arrangement $\arr$ by \defn{flipping} a triangle $T$ to obtain $\arr'$, i.e. changing the orientation of the pseudolines supporting $T$. 
This is equivalent to $\sigma_\arr$ and $\sigma_{\arr'}$ differing by a single value, corresponding to $T$. See Figure~\ref{fig:plaflip}. 

The most elementary tool to locate triangles is the so called \defn{sweeping lemma for pseudolines}.
\begin{lemma}[\cite{sweeps}]\label{lemma:sweepingpla}
    Let $\ell$ be a pseudoline in a marked arrangement. If there is a crossing above $\ell$, then there is a crossing above $\ell$ that forms a triangle supported by $\ell$. The same holds for crossings below $\ell$. 
\end{lemma}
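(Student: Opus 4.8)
The plan is to locate the triangle by an extremal argument: among all crossings lying above $\ell$, I would select one that is \emph{as close to $\ell$ as possible}, and then argue that the non-empty triangle it spans together with $\ell$ is in fact a genuine (empty) triangular face. Concretely, take a crossing $p=\ell_i\cap\ell_j$ above $\ell$, and consider the three pairwise crossings $a=\ell\cap\ell_i$, $b=\ell\cap\ell_j$, $p=\ell_i\cap\ell_j$. Since any two pseudolines cross exactly once, these three points bound a non-empty triangle $T$ whose base $ab$ lies on $\ell$ and whose apex $p$ lies above $\ell$; this is exactly a triangle "supported by $\ell$ with a crossing above $\ell$", except that $T$ may contain further pseudolines in its interior. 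The whole game is to choose $p$ so that $T$ is empty.

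The selection rule I would use is: let $p$ be a \emph{lowest} crossing above $\ell$, i.e.\ one closest to $\ell$. The reason this forces emptiness is clean in the straight-line model and serves as the guiding intuition: if some pseudoline $\ell_k$ entered the interior of $T$, it would have to cross two of the three bounding arcs. Each crossing of $\ell_k$ with the side $\ell_i$ or $\ell_j$ occurs strictly between a base vertex ($a$ or $b$, which lies on $\ell$) and the apex $p$, hence at a point that is above $\ell$ but strictly closer to $\ell$ than $p$ is; and if $\ell_k$ crosses the base itself it must still exit through a side, again producing a crossing above $\ell$ closer than $p$. Either way we contradict the minimality of $p$. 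Therefore no pseudoline meets the interior of $T$, so $T$ is a triangular face supported by $\ell$, $\ell_i$, $\ell_j$ with the crossing $p=\ell_i\cap\ell_j$ above $\ell$, which is what we want. The statement for crossings below $\ell$ follows by the symmetric argument (or by reflecting the north face).

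The main obstacle is making "lowest crossing above $\ell$" rigorous for \emph{pseudolines}, where no metric is available and a naive inclusion-minimal choice of $T$ fails: a pseudoline may cut off the apex by crossing both sides $\ell_i$ and $\ell_j$ (leaving a smaller apex-triangle whose base is no longer on $\ell$, and a quadrilateral below it), so inclusion-minimality among base-on-$\ell$ triangles does not by itself yield a contradiction. The fix is to replace the metric notion of height by a combinatorial one coming from a \emph{sweep}: starting from a curve hugging $\ell$ and moving it monotonically upward, the crossings above $\ell$ are encountered one at a time, and I would let $p$ be the \emph{first} crossing encountered. By construction nothing above $\ell$ lies "below" $p$ in the sweep order, and the side-crossing analysis above then shows no pseudoline can traverse $T$, since any such traversal would exhibit a crossing encountered strictly before $p$. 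Verifying that this sweep order is well defined and that the first crossing's triangle is genuinely free of all pseudolines (not merely of interior vertices) is the delicate technical core; once that monotonicity is secured, the extremal contradiction goes through verbatim and the lemma follows.
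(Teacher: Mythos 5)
Your straight-line argument is sound: taking a crossing $p$ at minimal Euclidean distance from $\ell$ and observing that any line traversing the triangle $T$ must cross one of the two non-base sides at a point strictly between a base vertex and the apex (hence above $\ell$ and strictly closer to it) is a complete proof for line arrangements. You also correctly diagnose why this breaks for pseudolines: there is no metric, and inclusion-minimality of $T$ fails because a pseudoline can cut off the apex, producing crossings whose associated regions are not nested inside $T$ (so no obvious potential function decreases).

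The genuine gap is in your proposed fix. The ``monotone upward sweep starting at $\ell$, encountering the crossings one at a time'' whose existence you invoke is not an available tool here --- it is precisely the sweep theorem for pseudoline arrangements (Snoeyink--Hershberger; Felsner--Weil, the very reference \cite{sweeps} from which the paper imports this lemma), and the standard proof of that theorem \emph{uses} the present lemma as its key step: to advance the sweep curve past its next crossing one needs a triangular face supported by the current sweep curve, which is exactly the statement you are trying to prove (with $\ell$ replaced by the sweep curve). So your reduction runs the logical dependence backwards and is circular unless you supply an independent proof of sweepability, which is at least as hard as the lemma itself. Equivalently, the well-foundedness of the relation ``crossing $v'$ lies in the region bounded by $\ell$ and the two pseudolines through $v$'' --- which is what your ``first crossing in the sweep order'' would certify --- is the actual combinatorial content that must be established for pseudolines, and your sketch defers it rather than proving it. Note that the paper itself does not prove this lemma either; it cites \cite{sweeps}, where the triangle lemma is established first by a direct argument and the sweep is built on top of it, not the other way around.
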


There are two a-priori different orders on the set of signotopes with the same parameters: the \defn{inclusion order}, where $\sigma_1 \leq \sigma_2$, when $\sigma_1^{-1}\{+\} \subseteq \sigma_2^{-1}\{+\}$ and the \defn{single-step inclusion order}, where  $\sigma_1 \leq \sigma_2$, if there is a sequence of $-$ to $+$ flips that transform $\sigma_1$ into $\sigma_2$.
The following was proven by Felsner and Weil.
\begin{theorem}[\cite{theoremonhbo}]

\label{thm:ssioandio}
    The single-step inclusion order and the inclusion order on rank three signotopes coincide.
\end{theorem}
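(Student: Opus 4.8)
The plan is to prove the two orders coincide by establishing the two inclusions separately, with only the harder one requiring work. One direction is immediate: a single $-\to+$ flip changes exactly one value of the signotope from $-$ to $+$, so it strictly enlarges $\sigma^{-1}\{+\}$, and composing flips shows that whenever $\sigma_2$ is reachable from $\sigma_1$ by $-\to+$ flips we have $\sigma_1^{-1}\{+\}\subseteq\sigma_2^{-1}\{+\}$. Thus the single-step inclusion order refines the inclusion order, and the content is the converse: if $\sigma_1\le\sigma_2$ in the inclusion order, then $\sigma_1$ can be transformed into $\sigma_2$ by $-\to+$ flips. I would set $D\coloneqq\sigma_2^{-1}\{+\}\setminus\sigma_1^{-1}\{+\}$ and induct on $|D|$, reducing everything to the following Key Lemma: if $\sigma_1\le\sigma_2$ and $\sigma_1\neq\sigma_2$, there is a triple $T\in D$ whose up-flip keeps $\sigma_1$ a signotope. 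Granting this, flipping $T$ yields $\sigma_1'$ with $\sigma_1^{-1}\{+\}\subsetneq(\sigma_1')^{-1}\{+\}\subseteq\sigma_2^{-1}\{+\}$, hence $\sigma_1<\sigma_1'\le\sigma_2$ with $|D|$ smaller, and induction closes the argument.

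To prove the Key Lemma I would first make up-flippability explicit. A triple $T=\{a<b<c\}$ with $\sigma_1(T)=-$ fails to be up-flippable precisely when, for some fourth index $d$, flipping $T$ introduces a second sign change in the length-four sequence attached to $\{a,b,c,d\}$. Splitting according to the position of $d$ gives four blocking patterns: two \emph{interior} ones, $a<d<b$ and $b<d<c$, and two \emph{exterior} ones, $d<a$ and $d>c$, each expressible as a prescribed pair of signs of the remaining triples of $\{a,b,c,d\}$ under $\sigma_1$. The decisive observation is that in every blocking pattern one may apply the at-most-one-sign-change axiom to the \emph{same} four-element set under $\sigma_2$, using $\sigma_2(T)=+$ together with $\sigma_1\le\sigma_2$; this forces a further triple of $\{a,b,c,d\}$ to be $+$ under $\sigma_2$ while it is $-$ under $\sigma_1$, i.e.\ it produces another element $T'\in D$. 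So if no triple of $D$ were up-flippable, every $T\in D$ would yield such a $T'\in D$, and the lemma follows once this assignment is shown to admit no infinite descent.

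Here lies the main obstacle. The interior patterns produce a $T'$ of strictly smaller span $c-a$ (or of equal span with the middle index displaced), whereas the exterior patterns produce a $T'$ of strictly larger span; since they move the parameters $a$ and $c$ in opposite directions, no single linear potential in $(a,c)$ can decrease along all edges. I would attack this by choosing $T\in D$ of \emph{maximal} span $c-a$, which rules out both exterior patterns at once, since each would exhibit an element of $D$ of strictly larger span. This leaves only interior blocks, which preserve or shrink the span; the remaining and genuinely delicate task is to engineer a secondary, well-founded monovariant (span, refined by the position of the middle index) that strictly decreases under the interior moves and that controls the possible re-expansion of the span after a shrink. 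Verifying case by case that the $\sigma_2$-values forced by the axiom always place the produced triple in $D$ is routine once the patterns are tabulated, but organizing the descent so it provably terminates is the crux.

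Should the monovariant prove recalcitrant, the robust alternative is induction on $n$: restrict $\sigma_1\le\sigma_2$ to $\binom{[n-1]}{3}$ to obtain, by the inductive hypothesis, a flip sequence among the triples avoiding $n$, and then treat the triples containing $n$ as a rank-two signotope on $[n-1]$ to which the classical rank-two case applies; the four-element sets $\{i,j,k,n\}$ are exactly what couple the two families, and the difficulty transfers to interleaving the flips so that every intermediate function remains a valid signotope. In either route the geometric reading is a useful guide: up-flippable triples are precisely the triangular faces of the arrangement, and the sweeping lemma (Lemma~\ref{lemma:sweepingpla}) guarantees that triangles exist, which I would use to argue that a flippable triple can always be located inside the region where $\sigma_1$ and $\sigma_2$ disagree.
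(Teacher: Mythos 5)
The paper does not prove this statement at all: it is quoted as a known result of Felsner and Weil \cite{theoremonhbo} and used as a black box (in the proof of Theorem~\ref{thm:blockbicolored}). So your attempt can only be judged on its own merits, and on those merits it is not a proof but a plan with the central step missing. The easy direction is fine, the reduction to your Key Lemma (some triple of $D=\sigma_2^{-1}\{+\}\setminus\sigma_1^{-1}\{+\}$ is up-flippable in $\sigma_1$) is the right skeleton, and your case analysis is correct as far as it goes: for an exterior blocker $d>c$ the blocking patterns are $(\sigma_1(bcd),\sigma_1(acd),\sigma_1(abd))\in\{(+,+,-),(+,-,-)\}$, and the four-set axiom applied to $\sigma_2$ then forces a \emph{specific} triple ($abd$ resp.\ $acd$) into $D$ with strictly larger span, so choosing $T\in D$ of maximal span indeed kills both exterior patterns.

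The genuine gap is exactly where you say it is, and it is not a technicality — it is the entire content of the theorem. For an interior blocker $a<d<b$ with pattern $(\sigma_1(dbc),\sigma_1(adc),\sigma_1(adb))=(-,-,-)$, the axiom for $\sigma_2$ forces only that \emph{at least one} of $dbc$, $adc$, $adb$ lies in $D$, and which one is not determined by $\sigma_1$; the analogous statement holds for $b<d<c$. Two of these candidates have strictly smaller span, and the same-span candidate $adc$ has its middle index displaced downward for one interior pattern and upward for the other. Worse, once you land on a smaller-span element of $D$, that element may itself be blocked by an \emph{exterior} pattern pointing back to a larger-span element, so the "blocked $\Rightarrow$ new element of $D$" relation can a priori cycle; no well-founded monovariant is exhibited, and your text concedes this ("organizing the descent so it provably terminates is the crux"). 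The fallback induction on $n$ is in the same state: restricting to $[n-1]$ and flipping those triples can violate the axiom on four-sets containing $n$, and you acknowledge that the interleaving problem is unresolved. Since the statement is a cited theorem, the honest courses of action are either to cite \cite{theoremonhbo} as the paper does, or to actually close the descent — for instance by a more careful extremal choice together with an analysis that excludes the span-oscillating cycles — which is precisely the work your proposal defers.
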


Two interesting arrangements are the so called \emph{cyclic arrangements}.
They correspond to the all $+$ or all $-$ signotopes. 
See Figure~\ref{fig:cyclic}.
\begin{figure}[ht]
    \centering
    \includegraphics[width=0.75\linewidth]{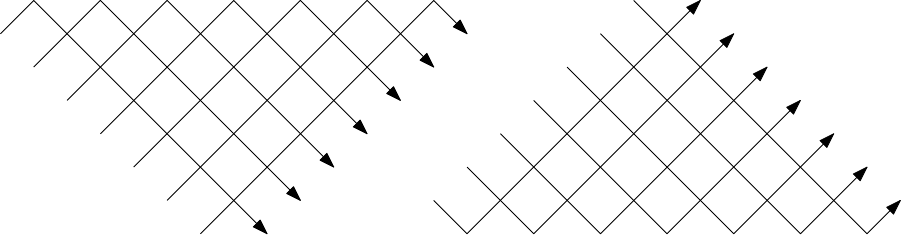}
    \caption{ The two cyclic arrangements on 8 pseudolines. Note that these arrangements are realizable by lines.}
    \label{fig:cyclic}
\end{figure}

\subsection{Our results about bichromatic faces}
We consider multiple weaker variants of Conjecture~\ref{conj:bichromatic triangle}. 
We first assume certain structures on the coloring of the arrangements.
For completeness, we state the following, which is already implicit in \cite{radtkeetal2023}.
\begin{restatable}{theorem}{fewreds}
\label{thm:fewreds}
	A bicolored arrangement with at most 5 red pseudolines has a bichromatic triangle.
\end{restatable}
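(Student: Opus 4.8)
The plan is to argue by contradiction. Suppose $\arr$ is bicolored with at most $5$ red pseudolines and has \emph{no} bichromatic triangle. Then every triangular face of $\arr$ supported by a red pseudoline must in fact be supported by three red pseudolines; call such a face a \emph{red triangle}, and define \emph{blue triangles} symmetrically. The first, easy observation isolates the mechanism: a red triangle needs three distinct red pseudolines, so if there are at most two reds there are no red triangles at all. On the other hand, since $\arr$ has at least three pseudolines, every red pseudoline $r$ has a crossing not incident to it, hence a crossing strictly above or strictly below it, and Lemma~\ref{lemma:sweepingpla} then produces a triangle supported by $r$. With at most two reds this triangle cannot be a red triangle, so it is bichromatic, a contradiction. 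This settles the cases of one or two red pseudolines.

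The main engine is to apply the sweeping lemma \emph{locally at each red line}. For a red pseudoline $r$ and a side on which $r$ has a crossing in $\arr$, Lemma~\ref{lemma:sweepingpla} yields a triangle supported by $r$ on that side, which under our assumption is a red triangle. I would record this as a \textbf{saturation condition}: each red line must support a red triangle on every side on which it has a crossing, hence on both sides whenever it is not extreme in $\arr$ --- the generic situation once blue pseudolines are present. Since a red triangle is a triangular face spanned by three reds whose interior meets no pseudoline, every red triangle of $\arr$ is in particular a triangle of the red subarrangement $\arr_R$ that is avoided by all blue pseudolines.

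The last and hardest step is to play the saturation condition against the scarcity of reds. With at most five red pseudolines, $\arr_R$ realizes only a handful of combinatorial types, each with very few triangles, and I would finish by a case analysis on this type. For three reds there is at most one candidate red triangle $T^{*}=\{r_1,r_2,r_3\}$; saturation then forces each $r_i$ to have all of its crossings on the single side of $r_i$ containing $T^{*}$, so each $r_i$ is extreme in $\arr$, and one checks that three pseudolines cannot simultaneously be extreme once a blue pseudoline is present. For four and five reds one repeats this bookkeeping: the few available red triangles of $\arr_R$ cannot saturate every red line on both required sides without forcing one of them into a position where a blue pseudoline must cross its interior, which makes that face bichromatic. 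I expect the genuine difficulty to lie exactly in this interaction between the blue pseudolines and $\arr_R$: Lemma~\ref{lemma:sweepingpla} only guarantees a triangle using a crossing \emph{closest} to $r$, so the real work is to argue that this closest, blue-free triangle is nonetheless red--red--red, which pins down the local picture tightly enough to locate a blue pseudoline inside a supposedly red triangle. Enumerating the finitely many types of $\arr_R$ and ruling each one out in this way is the crux of the argument.
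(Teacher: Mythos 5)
Your framework (assume no bichromatic triangle, so every triangle touching a red line is all-red; derive a ``saturation'' condition from Lemma~\ref{lemma:sweepingpla}; then case-analyze the red subarrangement) is sound and is genuinely close in spirit to the paper's proof, and your treatment of the cases of at most three red pseudolines is correct --- in particular the observation that the red line crossed \emph{in the middle} by a blue pseudoline has crossings on both sides, hence cannot be extremal, is exactly the kind of argument needed. However, there is a genuine gap: you explicitly leave the four- and five-red cases as ``bookkeeping,'' and the mechanism you propose for closing them --- showing that the red subarrangement $\arr_R$ has too few triangles to saturate every red line on both required sides --- actually \emph{fails} for the hardest configuration. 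In the $5$-star (pentagram), every red line supports three triangles of $\arr_R$, one on its pentagon-free side and two on its pentagon side, so saturation is combinatorially satisfiable and no counting of red triangles of $\arr_R$ alone can produce a contradiction. This is precisely the case where the paper has to work hardest, and it requires ideas absent from your proposal: one first shows the five tip triangles must be empty faces of $\arr$ (hence the pentagon too, so blue pseudolines are confined to unbounded faces of the $5$-star), then locates a blue--blue crossing in an unbounded face, takes the red line separating it from the pentagon, sweeps toward the crossing, and --- if the sweep returns a tip triangle of the $5$-star --- \emph{flips} that triangle and sweeps again, arguing that the second triangle found is bichromatic and already present in the original arrangement (the one exceptional configuration where the flip creates the second triangle contradicts emptiness of the tips, see Figure~\ref{fig:5star}).

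A secondary, structural difference: rather than enumerating all combinatorial types of $\arr_R$ on up to five pseudolines, the paper inducts by deleting red lines that are extremal in $\arr_R$. If such a line is extremal in $\arr$, delete it and apply induction; if not, it has a side containing crossings but no red--red crossings, and sweeping there yields a bichromatic triangle immediately. This collapses every case except the one arrangement of five pseudolines with no extremal line, namely the $5$-star. Adopting this reduction would spare you the type enumeration, but you would still need the flip-and-resweep argument (or a substitute) to finish the $5$-star case; as written, your proposal identifies where the difficulty lies but does not resolve it.
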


This implies that if we color an arrangement with $n/5$ colors and use all of them, then there is a non-monochromatic triangle.
It also implies the conjecture holds for all arrangements with $n\leq 11$.
We call an (unmarked) bicolored pseudoline arrangement \defn{block-bicolored}, if we can choose a north cell, such that the first pseudolines in the numbering induced by the north cell are red and the rest are blue, see Figure~\ref{fig:example_block_bicolored}.
\begin{figure}
    \centering
    \includegraphics[width=0.75\linewidth]{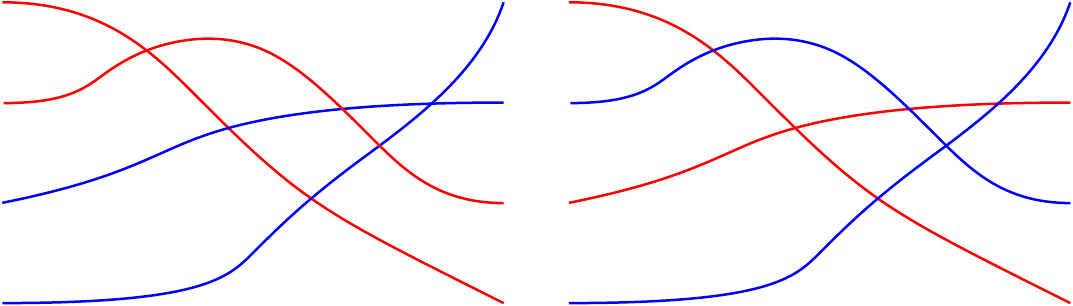}
    \caption{ The arrangement on the left is block-bicolored. The arrangement on the right is not.}
    \label{fig:example_block_bicolored}
\end{figure}
The following result was first proven by Felsner [personal communication]. Here we provide a simpler proof. 

\begin{restatable}{theorem}{blockbicolored}
\label{thm:blockbicolored}
    If a pseudoline arrangement is \defn{block-bicolored} it has a bichromatic triangle.
\end{restatable}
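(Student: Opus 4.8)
The plan is to exploit the fact that, after marking, the red pseudolines are exactly the initial segment $\{\ell_1,\dots,\ell_r\}$ and the blue ones the final segment $\{\ell_{r+1},\dots,\ell_n\}$. If either color class has at most five pseudolines we are already done by \Cref{thm:fewreds} (applied with that class playing the role of red), so we may assume that both classes have at least two pseudolines, and in particular $r\ge 2$. The first step is to isolate a single blue pseudoline: I would pass to the sub-arrangement $\arr'=\{\ell_1,\dots,\ell_r,\ell_{r+1}\}$, which contains all red pseudolines and exactly one blue pseudoline. Since $r\ge 2$ there is a red--red crossing, and it lies strictly above or strictly below $\ell_{r+1}$; in that direction the sweeping lemma (\Cref{lemma:sweepingpla}) yields a triangle $T'$ of $\arr'$ supported by $\ell_{r+1}$ and two further pseudolines. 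As the only pseudolines of $\arr'$ besides $\ell_{r+1}$ are red, these two are red, so $T'$ is a bichromatic triangle of $\arr'$ whose apex is a red--red crossing $v=\ell_a\cap\ell_b$ and whose base lies on the blue pseudoline $\ell_{r+1}$; say its apex is above $\ell_{r+1}$, the other case being symmetric.

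It remains to transfer this to the full arrangement $\arr$. The key observation is that $\arr$ arises from $\arr'$ by adding the pseudolines $\ell_{r+2},\dots,\ell_n$, all of which are blue. Since $T'$ is a face of $\arr'$ and $\arr'$ already contains every red pseudoline, no red pseudoline meets the interior of the closed region $\overline{T'}$, so the only pseudolines of $\arr$ crossing $\overline{T'}$ are blue. I would then induct on the number of blue pseudolines meeting the interior of $\overline{T'}$, maintaining a triangular region $\Delta\subseteq\overline{T'}$ that has at least one red side, at least one blue side, and whose interior is met only by blue pseudolines. If no blue pseudoline meets the interior of $\Delta$, then $\Delta$ is a face of $\arr$, and being a triangle with a side of each color it is a bichromatic triangle, as required. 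Otherwise a blue pseudoline $\ell$ crosses $\Delta$, cutting off a sub-triangle at one of its vertices; choosing $\ell$ so that the cut-off vertex is incident to a red side of $\Delta$, the smaller triangle again has a red side, picks up a blue side on $\ell$, and contains strictly fewer blue pseudolines in its interior, so the induction continues.

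The main obstacle is precisely this last localization step: one must guarantee that the region carved out can always be taken to be a genuine triangle still carrying a side of each color, rather than a larger polygon or a monochromatic blue corner. Concretely, the danger is that every blue pseudoline available for cutting slices off the blue--blue corner of $\Delta$, so that the red side survives only on the non-triangular part. I expect this to be handled by always cutting with a blue pseudoline that crosses a red side of $\Delta$ when one exists, and, when no blue pseudoline crosses the red side, by applying the sweeping lemma to the red line carrying that side to locate a triangle of $\arr$ incident to it. Equivalently, one shows that the cell of $\arr$ incident to the red apex $v$ inside $\overline{T'}$ can be taken to be a triangle with two red sides and one blue base. Making this case analysis precise is the technical heart of the argument; the remainder is the routine bookkeeping of the induction.
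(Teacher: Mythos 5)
There is a genuine gap, and it is exactly at the step you yourself flag as the ``technical heart.'' Notice first that your argument never uses the block-bicolored hypothesis: passing to the sub-arrangement of all red pseudolines plus one blue pseudoline, sweeping to obtain $T'$, observing that only blue pseudolines cross the interior of $T'$, and the proposed localization induction are all available in an \emph{arbitrary} bicolored arrangement. So if the localization step could be completed, you would have proved \Cref{conj:bichromatic triangle} in full generality, which is open --- a strong signal that the deferred step cannot be routine. And indeed it fails: once the maintained triangle $\Delta$ has one red side and two blue sides, it can happen that \emph{every} blue pseudoline crossing the interior of $\Delta$ crosses the two blue sides. Then each available cut severs a monochromatic blue corner, while the piece containing the red side is a quadrangle (or a polygon with more sides), so no cut preserves both ``triangle'' and ``bichromatic,'' and the induction halts. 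Your suggested fallbacks do not repair this. Sweeping (\Cref{lemma:sweepingpla}) from the red pseudoline carrying that side yields a triangle incident to that pseudoline, but possibly far outside $\Delta$ and possibly all-red. And the cell of $\arr$ at the red apex $v$ need not be a triangle: if two blue pseudolines crossing $T'$ each separate $v$ from the blue side and cross each other inside $T'$, the cell at $v$ is a red-red-blue-blue quadrangle. This obstruction is not an artifact of your bookkeeping --- it is precisely the configuration that limits the paper's unconditional result, \Cref{thm:always tri or quadrangle}, whose proof follows essentially your outline (reds plus one blue, sweep, reinsert blues, then a lens-sweeping argument) and for exactly this reason can only conclude ``bichromatic triangle \emph{or} red-red-blue-blue quadrangle.''

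The paper's proof of \Cref{thm:blockbicolored} is of a completely different, global nature, and the block hypothesis is what makes it run. One marks the arrangement at the north face given by the block structure and compares $\arr$, via its signotope $\sigma_\arr$, with two extremal arrangements $\arr_{min}$ and $\arr_{max}$ that agree with $\arr$ on all monochromatic triples but have every bichromatic triple oriented $-$ resp.\ $+$; the block structure (reds forming an initial segment of the numbering) is what guarantees these sign patterns are consistent, i.e.\ are again valid signotopes. Then $\arr_{min}\le\arr\le\arr_{max}$ in the inclusion order, and \Cref{thm:ssioandio} (Felsner--Weil) converts this into flip sequences from $\arr_{min}$ to $\arr$ and from $\arr$ to $\arr_{max}$ consisting only of bichromatic flips; since $\arr_{min}\neq\arr_{max}$, one of these sequences is nonempty, and its first flip is a bichromatic triangle of $\arr$. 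If you want to salvage a local argument along your lines, you must find a place where the ordering of the colors along the numbering is used essentially; as written, your proof attempt proves too much.
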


We then broadened the question and considered the existence of bichromatic faces with low complexity.
 \begin{restatable}{theorem}{triorquad}\label{thm:always tri or quadrangle}
	Every bicolored arrangement contains a bichromatic triangle or a bichromatic quadrangle.
\end{restatable}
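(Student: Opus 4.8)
The plan is to localize a bichromatic face by first producing a small \emph{region} in which only one color is allowed to interfere, and then to control the complexity of the faces inside it. First I would dispose of the trivial case: if one color class has at most five pseudolines, Theorem~\ref{thm:fewreds} (applied to whichever class is smaller) already gives a bichromatic triangle, so I may assume both colors are plentiful. Now fix a blue pseudoline $b$ and consider the sub-arrangement $\arr'$ consisting of $b$ together with all red pseudolines. Since there are at least two red lines, some red--red crossing lies above or below $b$; applying the sweeping lemma (Lemma~\ref{lemma:sweepingpla}) to $b$ inside $\arr'$ yields a triangle $T$ of $\arr'$ supported by $b$ and two red pseudolines $r_1,r_2$, with the red--red crossing $r_1\cap r_2$ as its apex. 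Because $T$ is a face of $\arr'$, no red pseudoline enters its interior, so $T$ is crossed only by blue pseudolines. If none do, $T$ is already a bichromatic triangle and we are done.

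The key observation is that the three corner faces of $T$ are automatically bichromatic: the apex face has two red edges (on $r_1,r_2$), and the two base corners $r_1\cap b$ and $r_2\cap b$ are red--blue crossings, so their faces carry both a red and a blue edge. Thus it suffices to force one of these (or some other red-incident face) to be a triangle or a quadrangle. I would prove the following descent by induction on the number of blue chords crossing $T$: any triangular region bounded by two red pseudolines and one blue pseudoline, with interior met only by blue pseudolines, contains a bichromatic triangle or quadrangle. If some blue chord crosses both red sides, it separates the apex from the base; I cut along it, keep the apex sub-triangle (again two red sides, one blue side, strictly fewer interior chords) and recurse, while noting that the complementary region bounded by $r_1,r_2,b$ and the chord is already a bichromatic quadrangle candidate. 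This is exactly where allowing quadrangles buys us slack over the pure-triangle conjecture.

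The remaining, and main, case is when no blue chord separates the apex from the base, so every interior blue chord meets the blue base and exactly one red side. Here I would look at the faces incident to $r_1$ and $r_2$. A clean way to finish is a counting argument: viewing the closed triangle as a disk, an Euler-formula computation shows that the average number of edges over the interior faces is strictly less than $4$, so a triangular face must exist. The hard part will be upgrading this to a bichromatic small face, because the crossings among the interior blue chords can create all-blue triangles and can inflate the red-incident faces into pentagons or larger. I expect this to be the principal obstacle, and I would overcome it by choosing $T$ innermost (minimizing the interior blue chords, which already forbids a chord crossing both red sides) and by a discharging argument that moves the Euler deficit onto the faces having a red edge, so that one of them is forced down to size three or four.

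In summary, the architecture is: sweep to obtain a two-red/one-blue triangular pocket whose interior sees only blue pseudolines; peel apex sub-triangles as long as a blue chord separates apex from base; and in the base case run a discharging/averaging argument localized to the faces touching the two red sides. The one step I expect to require real care is controlling the faces created by mutual crossings of the interior (same-colored) pseudolines, since these are precisely the configurations that keep the full bichromatic-triangle conjecture (Conjecture~\ref{conj:bichromatic triangle}) open; the relaxation to triangles \emph{or} quadrangles is what lets the count close.
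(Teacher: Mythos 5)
Your proposal follows the same skeleton as the paper's proof: sweep in the sub-arrangement of all red pseudolines plus one blue pseudoline (Lemma~\ref{lemma:sweepingpla}) to obtain a red-red-blue pocket $T$ whose interior is crossed only by blue pseudolines, and recurse on the apex sub-triangle whenever some blue chord crosses both red sides; this descent is valid, and your Euler computation showing that some interior face of the pocket is a triangle is also correct. The problem is the main case, and it is exactly the step you yourself concede is open: when every blue chord crosses the blue base and one red side, you need some face \emph{incident to a red side} to have at most four edges, and you offer only the expectation that an unspecified discharging argument will force the Euler deficit onto the red-incident faces. That expectation is not a proof: the averaging bound is blind to colors, the all-blue triangles formed by the chords can absorb the entire deficit as far as counting is concerned, and choosing $T$ innermost gives you nothing beyond what your descent already guarantees (no chord crosses both red sides). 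So the proposal is incomplete precisely at its load-bearing step.

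The paper closes this case with a structural tool rather than a counting one. Join the two red sides of $T$ at the apex into a single curve $L$; then $T$ becomes a lens bounded by $L$ and the blue base, and every blue chord is a transversal arc crossing each of the two boundary curves exactly once. The Lens Sweeping Lemma (Lemma~\ref{lem_parr sweep}) then says that if two blue chords cross inside the lens, some face of the lens is a triangle supported by $L$; undoing the merge at the apex, that face is either a red-blue-blue triangle or a red-red-blue-blue quadrangle of the original arrangement --- this is exactly where the quadrangle in the statement (and the remark following it) comes from. If no two blue chords cross inside the lens, the face at the red-red apex, cut off by the arc nearest to it, is directly seen to be bichromatic and small. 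To complete your write-up you would either have to perform this merging trick and invoke that lemma, or prove the red-incident-small-face claim yourself; that claim is essentially equivalent to the Lens Sweeping Lemma, which is proved by a sweep, and it is not at all clear that a purely local discharging argument can replace it.
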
 
\begin{remark}
    In the case where we find a quadrangle, this quadrangle is red-red-blue-blue.
\end{remark}

\subsection{(Pseudo)line-face and (pseudo)line-triangle hypergraphs}

In this section we always assume that the arrangement of (pseudo)lines is simple and that the (pseudo)lines pairwise cross.

Let $H_{line-face}(\arr)$ denote the line-face hypergraph of the arrangement of a set of lines $\arr$, i.e., the hypergraph whose vertices correspond to the lines and a subset forms a hyperedge if the corresponding lines form a face in the arrangement (including the unbounded faces). $H_{psline-face}(\arr)$ is defined the same way for arrangements of a set of pseudolines. Note that every line arrangement is a pseudoline arrangement.  We consider the non-bounded faces as well. 

Properties of these hypergraphs were regarded earlier by Bose et al.\cite{first} and then their initial results were improved by Ackerman et al. \cite{second} and Balogh and Solymosi \cite{BS18}. %The line-triangle hypergraph $H_\Delta(L)$ is the subhypergraph of the line-face hypergraph with hyperedges corresponding to triangles.

Let $\alpha$ be the independence number of a hypergraph, that is, the largest size of an independent set. The following theorem summarizes previous knowledge about the relevant parameters of line-face hypergraphs:
\begin{theorem}[\cite{first, second,BS18}]\label{thm:oldLF}
    \begin{itemize}
        \item[]
        \item $\Omega( \sqrt{n\log n}) = \min\limits_{|\arr|=n \text{}}\alpha(H_{psline-face}(\arr))\le \min\limits_{|L|=n}\alpha(H_{line-face}(\arr))\le n^{5/6+o(1)}$,       
        \item $n^{1/6-o(1)} \le \max\limits_{|\arr|=n}\chi(H_{line-face}(\arr)) \le \max\limits_{|\arr|=n}\chi(H_{psline-face}(\arr)) = O( \sqrt{n/\log n})$,
         \item $n/2\le \max\limits_{|\arr|=n}\alpha(H_{line-face}(\arr))\le \max\limits_{|\arr|=n}\alpha(H_{psline-face}(\arr))< \frac{2}{3}n$.
    \end{itemize}
\end{theorem}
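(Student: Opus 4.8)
The plan is to split Theorem~\ref{thm:oldLF} into its three ``sandwiching'' middle inequalities, which are immediate, and its outer bounds, which I would reduce to a small number of genuinely analytic inputs. Every line arrangement is a pseudoline arrangement, so the family of line-face hypergraphs on $n$ vertices is contained in the family of pseudoline-face hypergraphs on $n$ vertices. A functional can only decrease under minimization over a larger family and only increase under maximization; hence $\min \alpha(H_{psline-face}) \le \min \alpha(H_{line-face})$, $\max \chi(H_{line-face}) \le \max \chi(H_{psline-face})$ and $\max \alpha(H_{line-face}) \le \max \alpha(H_{psline-face})$. These three inequalities need no further argument.

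For the outer bounds I would use the elementary relation $\chi(H) \ge |V(H)|/\alpha(H)$, which holds for every hypergraph because each color class is independent. This couples the independence bounds of the first bullet to the chromatic bounds of the second. Applied to an arbitrary pseudoline arrangement, the upper bound $\max \chi(H_{psline-face}) = O(\sqrt{n/\log n})$ yields $\alpha \ge n/\chi = \Omega(\sqrt{n\log n})$, which is precisely the universal lower bound of the first bullet. Dually, a single line arrangement realizing $\alpha \le n^{5/6+o(1)}$ forces $\chi \ge n/\alpha \ge n^{1/6-o(1)}$ on that arrangement, which is the chromatic lower bound of the second bullet. Thus the first two bullets reduce to two deep facts: (A) the chromatic upper bound $O(\sqrt{n/\log n})$ for all pseudoline arrangements, and (B) an explicit line arrangement of independence number $n^{5/6+o(1)}$ (together with a matching small-independence pseudoline construction if one wants the left equality to be tight).

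Facts (A) and (B) are the analytic heart of the statement and are exactly where I expect the real difficulty to lie; I would take them from \cite{second,BS18} rather than reprove them. For (A) the idea is that the face hypergraph is locally sparse --- short incidence bounds of Szemer\'edi--Trotter type limit how many small faces any bounded set of lines can span --- so a semi-random or container-driven coloring uses only $O(\sqrt{n/\log n})$ colors. For (B) one needs the opposite: an incidence-rich configuration (grid- or projective-plane-based) in which faces are so numerous that no set of $n^{5/6+o(1)}$ lines avoids all of them.

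For the third bullet, the lower bound $\max \alpha(H_{line-face}) \ge n/2$ is witnessed by an explicit construction of \cite{first} in which a set of $n/2$ lines can be chosen so that every face is supported by at least one of the remaining lines. The sharp upper bound $\max \alpha(H_{psline-face}) < \tfrac{2}{3}n$ I would approach by restriction: color a putative independent set blue and its complement red, assume $|\text{blue}| \ge \tfrac{2}{3}n$, and derive a contradiction. The blue sub-arrangement contains $\Omega(|\text{blue}|)$ triangular faces (the minimum number of triangles in an arrangement is linear \cite{minnumbertriangles}), and by independence each such blue triangle must be stabbed by a red pseudoline. The main obstacle is that one red pseudoline can stab a linear number of blue triangles --- its zone has linear complexity --- so a crude count only forces a constant number of red lines. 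Turning this into the tight bound $|\text{red}| > \tfrac{1}{3}n$ requires a charging argument that assigns to each red pseudoline only $O(1)$ ``private'' blue triangles; using Lemma~\ref{lemma:sweepingpla} one can attach to most blue lines a triangle lying so close to it that no red line can reach it. Making this charging precise is, I expect, the crux of the whole theorem.
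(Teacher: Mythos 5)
Your treatment of the middle inequalities and of the first two bullets is consistent with the paper: Theorem~\ref{thm:oldLF} is a summary of results from \cite{first,second,BS18}, the paper likewise only attributes the outer bounds to those papers, and the coupling $\chi(H)\ge |V(H)|/\alpha(H)$ is exactly how the construction of \cite{BS18} (your fact (B)) yields $n^{1/6-o(1)}\le\max\chi(H_{line-face})$. One remark on direction: the paper and \cite{second} run the first bullet the other way around, proving the independence lower bound $\Omega(\sqrt{n\log n})$ directly and then deducing the coloring bound $O(\sqrt{n/\log n})$ by iteratively extracting independent sets (this iteration is legitimate because a face of the whole arrangement all of whose bounding pseudolines lie in a subset $W$ is still a face of the sub-arrangement on $W$, with the same bounding lines, so sets extracted from sub-arrangements remain independent in the induced subhypergraph). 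Taking the $\chi$ bound as the black box and deducing the $\alpha$ bound from it, as you do, is logically fine but historically backwards; just be aware that if you unwind the citation, the $\alpha$ bound is the primary statement.

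The genuine gap is in the third bullet, the bound $\max\limits_{|\arr|=n}\alpha(H_{psline-face}(\arr))<\frac{2}{3}n$. Your plan --- count triangles of the blue (independent) sub-arrangement and charge each red pseudoline $O(1)$ ``private'' blue triangles --- is left incomplete by your own admission, and it is the wrong count: the blue sub-arrangement has only linearly many triangles while a single red pseudoline can stab linearly many of them, so any triangle-based argument must manufacture the entire bound through the charging scheme. The actual proof (from \cite{first}, and reproduced in this paper at the start of the proof of Theorem~\ref{thm:psline-face}) counts \emph{all} cells of the blue sub-arrangement: an independent set $I$ of size $s$ induces a sub-arrangement with exactly $s(s+1)/2+1$ cells; each of the $n-s$ remaining pseudolines is cut by $I$ into $s+1$ arcs and hence meets exactly $s+1$ of these cells; and independence forces every cell to be met by some pseudoline outside $I$, since an unmet cell would be a face of the whole arrangement bounded only by lines of $I$. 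This gives $(n-s)(s+1)\ge s(s+1)/2+1$, hence $n\ge\frac{3}{2}s+\frac{1}{s+1}$ and $s<\frac{2}{3}n$. The point is that the independent sub-arrangement has quadratically many cells while each outside line meets only linearly many, an advantage you discard by restricting to triangles. Far from being ``the crux of the whole theorem,'' this bound is a short counting argument; the deep inputs are, as you correctly identify, facts (A) and (B) from \cite{second,BS18}.
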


Let us summarize the history of the above theorem.
First in \cite{first} the lower bound of $\sqrt{n+1}-1$ was proved for $\min_{|\arr|=n}\alpha(H_{line-face}(\arr)$ and then this was improved to $\Omega( \sqrt{n\log n})$ in \cite{second}. Their proof works also for pseudolines and even for non-simple arrangements.
The upper bound was shown in \cite{BS18} by giving a construction that improves the best bound for the so called Erd\H os $(3,4)$-problem and then dualizing it (the dual relation between the two problems was mentioned already in \cite{second}).
These directly imply the bounds on $\max_{|\arr|=n}\chi(H_{line-face}(\arr))$ and $\max_{|\arr|=n}\chi(H_{psline-face}(\arr))$ although there was previously a lower bound of $\Omega(\log n/\log\log n)$ in \cite{first}. We remark that so far these results hold also if we consider only those hyperedges that correspond to  bounded triangular faces of the arrangement.

The upper bound on $\max_{|\arr|=n}\alpha(H_{line-face}(\arr))$ is again from \cite{first}, and their proof works for pseudolines as well. The lower bound was also proved in \cite{first} by showing that for the line arrangement in which the lines are in convex position, its line-face hypergraph is two-colorable, therefore one of the color classes must be an independent set of size at least $n/2$. Notice that this also implies that $\min_{|\arr|=n}\chi(H_{face}(\arr))=2$. They also showed that in any line arrangement whose line-face hypergraph is two-colorable, both color classes must be of size $n/2+O(\sqrt{n})$, so using only such line arrangements we won't be able to improve the lower bound on  $\max_{|\arr|=n}\chi(H_{line-face}(\arr))$ considerably. 

Our first result about such hypergraphs essentially solves the problem of maximizing $\alpha$:

\begin{theorem}\label{thm:psline-face}
 \begin{itemize}
        \item[]
        \item
    $\max\limits_{|\arr|=n}\alpha(H_{psline-face} (\arr))=\lceil \frac{2}{3}n-1\rceil$,
    \item $\lfloor \frac{2}{3}(n-1)\rfloor\le \max\limits_{|\arr|=n}\alpha(H_{line-face} (\arr))\le \lceil \frac{2}{3}n-1\rceil$.
 \end{itemize}
\end{theorem}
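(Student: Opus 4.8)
The plan is to separate the two directions. The engine for both is a reformulation: for $S\subseteq\arr$ put $T=\arr\setminus S$. A face of $\arr$ whose supporting set lies in $S$ is exactly a cell of the induced sub-arrangement $\arr_S$ that no pseudoline of $T$ crosses (removing the $T$-pseudolines only merges cells of $\arr$ into cells of $\arr_S$). Hence $S$ is independent in $H_{psline-face}(\arr)$ \emph{if and only if} every cell of $\arr_S$ is crossed by some pseudoline of $T$; this makes both the upper bound and the construction into a question about \emph{stabbing all cells of a sub-arrangement}.

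For the upper bound (which applies to lines and pseudolines alike) I would argue by counting. A simple arrangement of $s=|S|$ pseudolines has $1+s+\binom{s}{2}$ cells, while each $t\in T$ is cut by its $s$ crossings with $\arr_S$ into $s+1$ arcs, each arc lying inside a single cell, so $t$ meets at most $s+1$ cells. Since $T$ must meet every cell, $|T|\,(s+1)\ge 1+s+\binom{s}{2}$, and as $\tfrac{1+s+\binom{s}{2}}{s+1}=\tfrac{s}{2}+\tfrac{1}{s+1}$ this forces $|T|=n-s\ge\lfloor s/2\rfloor+1$, i.e. $\lfloor 3s/2\rfloor\le n-1$. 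An elementary residue check shows that the largest $s$ with $\lfloor 3s/2\rfloor\le n-1$ is exactly $\lceil\tfrac23 n-1\rceil$, giving $\alpha(H_{psline-face}(\arr))\le\lceil\tfrac23 n-1\rceil$ for every arrangement, which is the upper bound in both bullets.

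For the pseudoline lower bound I would build a matching construction from the reformulation: for $s=\lceil\tfrac23 n-1\rceil$ it suffices to exhibit an $s$-pseudoline arrangement all of whose cells can be stabbed by the remaining $n-s$ pseudolines. The key extra freedom for pseudolines is that a transversal entering below all of $\arr_S$ and leaving above all of it can be made \emph{level-monotone}, meeting exactly one cell at each level $0,\dots,s$ (level $=$ number of $S$-pseudolines below). I would therefore pick $\arr_S$ so that each intermediate level has as few cells as possible: taking $\arr_S$ to be the wiring diagram of odd--even transposition sort on $s=2k$ wires puts exactly $k$ crossings in each intermediate gap, hence exactly $k+1$ cells on each intermediate level, and one routes $k+1=\lfloor s/2\rfloor+1$ monotone transversals that partition the cells of every level. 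This realizes $\alpha\ge 2k$ on $n=3k+1$ pseudolines, matching $\lceil\tfrac23 n-1\rceil$; the residues $n\equiv 0,2\pmod 3$ (where the target $s=2k+1$ is odd, so the unique top and bottom cells already consume all the slack) are obtained by allowing a few non-monotone transversals and by padding with extra pseudolines, which preserves independence.

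The main obstacle is exactly this construction: proving that the chosen transversals, together with $\arr_S$ and with one another, form a \emph{simple} pseudoline arrangement while collectively crossing \emph{every} cell is where the balance of the odd--even network and the monotone-level bookkeeping must be made to fit, and the odd/parity cases need the most care. For the second bullet (straight lines) the same reformulation gives the upper bound for free, but the clean one-cell-per-level argument fails because a straight transversal's level need not be monotone in $\arr_S$; I would instead realize a slightly less efficient straight-line stabbing with more slack, which I expect to cost one unit when $n\equiv 2\pmod 3$ and thus yield only the weaker lower bound $\lfloor\tfrac23(n-1)\rfloor$. Pinning down a straight-line realizable family, and deciding the $n\equiv 2\pmod 3$ gap between $\lfloor\tfrac23(n-1)\rfloor$ and $\lceil\tfrac23 n-1\rceil$ for lines, is the part I expect to be hardest.
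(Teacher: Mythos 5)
Your reformulation (a subset $S$ is independent iff every cell of $\arr_S$ is stabbed by some pseudoline of $T=\arr\setminus S$) and your counting upper bound are exactly the paper's argument, and that part of your proposal is complete and correct. The substance of the theorem, however, lies in the two lower-bound constructions, and there your proposal has genuine gaps. For the pseudoline bound your odd--even transposition idea does work in the residue $n=3k+1$, and can in fact be closed: on the reversal permutation every comparator of the network swaps (the number of comparators equals $\binom{2k}{2}$, the number of inversions), so every intermediate level has exactly $k$ crossings, and the cells of ``column $i$'' at consecutive levels have overlapping time intervals all containing $x=2i+\tfrac12$; hence the $k+1$ transversals can be taken vertical inside the strip, with their pairwise crossings routed in the unbounded bottom cell. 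This is close in spirit to the paper's construction ($2k$ blue pseudolines forming a staircase, i.e.\ the convex-position arrangement, stabbed by $k+1$ roughly vertical red pseudolines). But the residue $n=3k+2$, $s=2k+1$ is a real hole: as you observe yourself, all-monotone transversals provably fail there (all $k+1$ would pass through the unique bottom and top cells, wasting $2k$ cell-visits against a slack of only $k$), and ``allowing a few non-monotone transversals'' is a hope, not a construction. The paper instead gets this case by deleting a carefully chosen blue and red pseudoline from the $3k+1$ construction, and its Lemma~\ref{lem:oneface} shows why the residue is delicate: in any extremal example all red pseudolines are forced to cross pairwise inside one common unbounded blue cell. (Note that deleting a blue pseudoline always preserves independence, since cells of $\arr_S$ only merge, and padding with red pseudolines trivially preserves it; together these settle $n\equiv 0\pmod 3$ from the $3k+1$ case, so $n\equiv 2\pmod 3$ is the only hard residue --- but it does need an argument.)

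The second bullet is in worse shape: you give no straight-line construction at all, so the lower bound $\lfloor \frac{2}{3}(n-1)\rfloor$ is simply not established by your write-up. The paper's construction here is concrete and of a different flavor from any wiring-diagram argument: take the $2k$ lines extending the sides of a regular $2k$-gon as the blue set; add $k$ red lines through the center (the long diagonals for $k$ even, the edge bisectors for $k$ odd); use the symmetry to check that every blue face has its interior or two of its vertices met by a red line; then perturb (rotate the red lines slightly about the center, and tilt one line of each parallel blue pair so that the new blue-blue crossings have very negative $x$-coordinate) and add one extra vertical red line that stabs all faces created by the formerly parallel pairs. This yields $3k+1$ lines with $2k$ independent, and deleting blue lines gives the remaining residues, losing exactly the one unit at $n\equiv 2\pmod 3$ that appears in the statement. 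Without some such explicit straight-line family, your proposal proves only the upper bounds and the $n\equiv 0,1\pmod 3$ pseudoline cases.
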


Note that for $n=3k$ (resp. $n=3k+1$)  the lower and upper bounds for $\max\limits_{|\arr|=n}\alpha(H_{line-face} (\arr))$ are both equal to $2k-1$ (resp. $2k$). However, for $n=3k-1$ there remains a slight gap as these bounds give only that $2k-2\le \max\limits_{|\arr|=n}\alpha(H_{line-face} (\arr))\le 2k-1$. For small values $k=1,2,3,4$ the upper bound gives the truth, as we will see. Note that for pseudolines the upper bound gives always the truth, $\max\limits_{|\arr|=n}\alpha(H_{psline-face} (\arr))=2k-1$ for $n=3k-1$.

\smallskip
Instead of the (pseudo)line-face hypergraph, we can consider the (pseudo)line-triangle hypergraph, which we denote by $H_{line-triangle}(\arr)$ (resp. by $H_{psline-triangle}(\arr)$). We consider the non-bounded triangular faces as well. The next is a direct corollary of Theorem \ref{thm:oldLF}, using that the (pseudo)line-triangle hypergraph is a subhypergraph of the (pseudo)line-face hypergraph. We also use that in the construction in \cite{BS18} it is enough to consider only bounded triangular faces to obtain a small independence number.

%already considering only the triangular faces the independence number is small (and therefore the chromatic number is large).

\begin{cor}
    \begin{itemize}
        \item[]
        \item $\Omega( \sqrt{n\log n})= \min\limits_{|\arr|=n \text{}}\alpha(H_{psline-triangle}(\arr))\le \min\limits_{|\arr|=n} \alpha(H_{line-triangle}(\arr))\le n^{5/6+o(1)}$,
        \item $n^{1/6-o(1)} \le \max\limits_{|\arr|=n}\chi(H_{line-triangle}(\arr)) \le \max\limits_{|\arr|=n}\chi(H_{psline-triangle}(\arr)) = O( \sqrt{n/\log n})$.
    \end{itemize}
\end{cor}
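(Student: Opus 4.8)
The plan is to exploit that, for any arrangement $\arr$, the hypergraph $H_{psline-triangle}(\arr)$ (resp.\ $H_{line-triangle}(\arr)$) has the same vertex set as the corresponding face-hypergraph $H_{psline-face}(\arr)$ (resp.\ $H_{line-face}(\arr)$) but keeps only those hyperedges that come from triangular faces. Thus each triangle-hypergraph is a spanning subhypergraph of its face-hypergraph, and two elementary monotonicity facts carry almost all of the weight. First, deleting hyperedges can only enlarge an independent set, so $\alpha(H_{psline-triangle}(\arr)) \ge \alpha(H_{psline-face}(\arr))$ for every $\arr$. Second, any proper coloring of a hypergraph remains proper after deleting hyperedges, so $\chi(H_{psline-triangle}(\arr)) \le \chi(H_{psline-face}(\arr))$ for every $\arr$.

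For the first bullet I would obtain the lower bound by taking the minimum over all pseudoline arrangements in $\alpha(H_{psline-triangle}) \ge \alpha(H_{psline-face})$ and then invoking $\min_{\arr}\alpha(H_{psline-face}(\arr)) = \Omega(\sqrt{n\log n})$ from Theorem~\ref{thm:oldLF}; since $f(\arr)\ge g(\arr)$ pointwise forces $\min f \ge \min g$, this yields $\min_\arr \alpha(H_{psline-triangle}) = \Omega(\sqrt{n\log n})$. The middle inequality is immediate: every line arrangement is a pseudoline arrangement, so the minimum over the larger class of pseudoline arrangements is at most the minimum over line arrangements. The upper bound $n^{5/6+o(1)}$ is the one place the subhypergraph argument fails, because passing to a subhypergraph increases $\alpha$; here I would instead point to the explicit line arrangement of \cite{BS18} and use the recorded fact that already its bounded triangular faces alone force $\alpha \le n^{5/6+o(1)}$, so this single arrangement witnesses the upper bound on the minimum.

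For the second bullet, the upper bound $O(\sqrt{n/\log n})$ on $\max_\arr \chi(H_{psline-triangle})$ follows from the chromatic monotonicity above together with $\max_\arr \chi(H_{psline-face}) = O(\sqrt{n/\log n})$ from Theorem~\ref{thm:oldLF}, by taking the maximum over all pseudoline arrangements. The middle inequality again uses that lines are pseudolines, so the maximum over pseudoline arrangements dominates the maximum over line arrangements. For the lower bound $n^{1/6-o(1)}$ on $\max_\arr \chi(H_{line-triangle})$, I would apply the standard bound $\chi(H) \ge n/\alpha(H)$ to the same \cite{BS18} arrangement: its triangle-restricted independence number is at most $n^{5/6+o(1)}$, so its triangle-hypergraph needs at least $n/n^{5/6+o(1)} = n^{1/6-o(1)}$ colors.

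The only genuine obstacle, and the reason this is not a purely formal corollary, is the upper bound on the minimum independence number (equivalently the lower bound on the maximum chromatic number): since deleting the non-triangular hyperedges enlarges $\alpha$, one cannot inherit the face-hypergraph bound automatically and must verify that the \cite{BS18} construction keeps its extremal behavior when restricted to triangular faces. Every remaining inequality is a direct transcription of the monotonicity of $\alpha$ and $\chi$ under taking spanning subhypergraphs, combined with the trivial inclusion of line arrangements into pseudoline arrangements.
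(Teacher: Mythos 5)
Your proposal is correct and follows essentially the same route as the paper: subhypergraph monotonicity of $\alpha$ and $\chi$, the inclusion of line arrangements into pseudoline arrangements, and the observation that the construction of \cite{BS18} retains its small independence number (hence large chromatic number, via $\chi \ge n/\alpha$) when only bounded triangular faces are kept. You also correctly isolate the one non-formal step --- that the \cite{BS18} bound must be verified for triangular faces alone --- which is exactly the point the paper flags.
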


Thus our knowledge about the line-face and line-triangle hypergraph is the same for the above parameters. Our second result about such hypergraphs shows, however, that the behaviour of $\max\alpha$ changes considerably:

\begin{restatable}{theorem}{maxalpha}\label{thm:line-triangle} 

$\max\limits_{|\arr|=n}\alpha(H_{psline-triangle} (\arr))=n-\Theta(\log n)$ and \\$\max\limits_{|\arr|=n}\alpha(H_{line-triangle} (\arr))=n-\Theta(\log n)$.
\end{restatable}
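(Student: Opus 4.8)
The plan is to pass to complements. A set $S$ of pseudolines is independent in $H_{psline-triangle}(\arr)$ exactly when no triangular face of $\arr$ is supported by three lines of $S$; equivalently, writing $\arr_S$ for the subarrangement on $S$, every triangular face of $\arr_S$ is stabbed (crossed) by a line of $\arr\setminus S$. Hence $S$ is independent iff $T:=\arr\setminus S$ is a transversal (hitting set) of the triangle-hyperedges, so $\alpha(H_{psline-triangle}(\arr))=|\arr|-\tau(\arr)$, where $\tau(\arr)$ is the minimum number of pseudolines meeting all triangular faces. Thus $\max_{|\arr|=n}\alpha = n-\min_{|\arr|=n}\tau$, and the theorem is equivalent to $\min_{|\arr|=n}\tau(\arr)=\Theta(\log n)$. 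Because every line arrangement is a pseudoline arrangement, it suffices to give a \emph{line} arrangement with $\tau=O(\log n)$ (yielding both lower bounds on $\alpha$) and to prove $\tau=\Omega(\log n)$ for every \emph{pseudoline} arrangement (yielding both upper bounds).

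For the construction (the bound $\alpha\ge n-O(\log n)$) I would build a self-similar arrangement by doubling. Suppose $\mathcal B_k$ has $2^k$ ``ground'' lines and $k$ ``stabber'' lines such that every triangular face spanned by ground lines is crossed by a stabber. To form $\mathcal B_{k+1}$ I would take two copies of the ground set, placed so that a single family of $k$ stabbers simultaneously stabs the triangles internal to each copy, and add one new stabber $t^\ast$ separating the two copies so that every triangle using ground lines from both copies must cross $t^\ast$. This yields $2^{k+1}$ ground lines and $k+1$ stabbers, so $\tau(\mathcal B_k)\le k=\log_2(2^k)$; taking $S$ to be the ground lines gives an independent set of size $n-O(\log n)$. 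The work here is to realise the reuse of stabbers across the two copies and the separator $t^\ast$ by \emph{straight} lines, e.g.\ by placing the copies at very different scales so that the cross-interaction is controlled by one transversal.

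For the universal bound ($\tau\ge\Omega(\log n)$) I would prove the sharper statement that if $S$ is independent then $|S|\le 2^{|T|+1}$, which gives $\tau=|T|\ge\log_2 n-O(1)$. Let $N(p)$ be the largest $m$ for which some $m$-line arrangement has all its triangular faces crossed by $p$ extra pseudolines. The base case is $N(0)=2$, since any three pairwise-crossing pseudolines bound a triangular face. For the inductive step I would prove a \textbf{splitting lemma}: in any arrangement $\arr_S$ whose triangular faces are all stabbed by a set $P$ of $p$ pseudolines, there is a stabber $t\in P$ and a subfamily $S'\subseteq S$ with $|S'|\ge|S|/2$ such that $t$ stabs none of the relevant triangular faces on $S'$, so these are handled by $P\setminus\{t\}$; this gives $N(p)\le 2N(p-1)$ and hence $N(p)\le 2^{p+1}$.

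The splitting lemma is the main obstacle. The delicate point is that a triangular face of the subarrangement $\arr_{S'}$ need not be a triangular face of $\arr_S$ (a line of $S\setminus S'$ may cut through it), so one cannot naively recurse on $\arr_{S'}$. The split must therefore be chosen along the stabber $t$, using the two sides of $t$ together with the sweeping lemma to locate a half of $S$ whose triangles stay on one side of $t$ and remain faces, so that the triangles we recurse on are genuinely faces stabbed only by $P\setminus\{t\}$. Making this extraction yield a constant fraction of $S$ while controlling the preserved faces is the crux; any constant fraction suffices, as it only affects the constant hidden in $\Theta(\log n)$.
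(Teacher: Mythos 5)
Your reduction to stabbing ($\alpha(H_{psline-triangle}(\arr))=|\arr|-\tau$, so that the theorem becomes $\min_{|\arr|=n}\tau(\arr)=\Theta(\log n)$) is correct, and your doubling construction for the lower bound is in substance the paper's: there, too, two copies of the current red (ground) block are stacked so that the $k$ existing blue stabbers can be identified across the two copies, and a single new blue line crosses all red triangles created between the blocks; straight-line realizability is then obtained by shifting one copy of the red lines along vertical blue lines and applying a projective transformation. What you leave open there (the two blocks must interact in a controlled ``grid'' so that one line really does stab all cross-triangles; the paper maintains the invariant that red and blue cross transversally and carries two variants of the construction, one with and one without blue--blue crossings) is genuine but completable work, consistent with your plan.

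The genuine gap is your splitting lemma, i.e.\ the entire upper bound. It is not proved, and the mechanism you sketch cannot give a constant fraction. Concretely: every line of $S$ crosses $t$; listing them in the order of these crossings, a pair of lines of $S$ crosses above $t$ or below $t$ according to whether the pair is an inversion of the permutation recording the order of the lines at infinity on the upper side of $t$. The only route your sketch offers for ensuring that the triangular faces of $\arr_{S'}$ stay on one side of $t$ (so that the stabbed faces of $\arr_S$ inside them are in fact stabbed by $P\setminus\{t\}$) is to take $S'$ with all pairwise crossings on one side of $t$, i.e.\ a monotone subsequence of that permutation. Erd\H os--Szekeres guarantees only $|S'|\ge\sqrt{|S|}$, and this is tight; the resulting recursion $N(p)\le N(p-1)^2$ gives $N(p)\le 2^{2^p}$, hence only $\tau=\Omega(\log\log n)$, exponentially weaker than what the theorem requires. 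If instead you drop one-sidedness and merely ask that the triangular faces of $\arr_{S'}$ be stabbed by $P\setminus\{t\}$, you hit exactly the problem you flag yourself: faces of $\arr_{S'}$ that are merged from non-triangular faces of $\arr_S$ are not controlled by the hypothesis, for any choice of $t$. So the crux is open, and the claimed recursion $N(p)\le 2N(p-1)$ (or any constant-factor variant) is unsupported. The paper's proof avoids recursion entirely and is the fix you need: by the dual Erd\H os--Szekeres theorem applied to triples rather than pairs, any $m$ red pseudolines contain a cyclic subarrangement of $c\log m$ lines; its $c\log m-2$ triangles are interior-disjoint and each contains a triangular face of the red subarrangement (every non-empty triangle contains an empty one); and a single added pseudoline can cross at most two triangles of a cyclic arrangement. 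Hence $\Omega(\log m)$ blue lines are needed, in one step and with no recursion.
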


\section{Proofs}

\begin{figure}[h]
  \centering
  \includegraphics[width=0.75\textwidth]{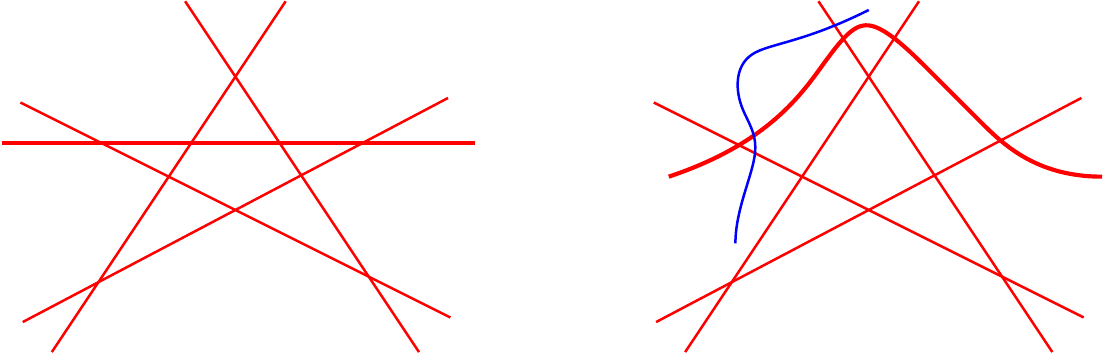}
  \caption{The 5-star is on the left side. On the right side, there is the situation after flipping the first triangle and finding a new triangle.}
  \label{fig:5star}
\end{figure}

\begin{proof}[Proof of \Cref{thm:fewreds}]
Let $\arr$ be a bicolored pseudoline arrangement with at most 5 red pseudolines and at least a single blue pseudoline. We start with a (finite) induction on the number of red lines. If there are at most two red pseudolines, both lines support some triangles, $\arr$ by Lemma~\ref{lemma:sweepingpla}. Since there are less then 3 red pseudolines in $\arr$ these triangles have to be bichromatic, giving the induction start.

Assume now that the red sub-arrangement has an extremal pseudoline $\ell$, that is, a pseudoline so that every crossing is on one of its sides. If $\ell$ is also extremal in $\arr$, we delete it. By induction, the remaining arrangement contains a bichromatic triangle which cannot be intersected by $\ell$, as $\ell$ was assumed to be extremal. If $\ell$ is not extremal in $\arr$ then it has a side which contains crossings, but no crossings of two red pseudolines. We apply Lemma~\ref{lemma:sweepingpla} to find a triangle supported by $\ell$ on this side. The triangle is bichromatic, since it can not consist of only red-red crossings. Hence we can assume that the red sub-arrangement has no extremal lines. As any arrangement on 4 pseudolines contains an extremal one, we can now assume that the number of red lines is exactly 5. It can easily be checked that the only arrangement of 5 pseudolines containing no extremal one is the 5-star, seen in \Cref{fig:5star}.

For the 5-star we can argue that the five triangles have to be empty: Assume one of them is not empty and consider the red line separating it from the red 5-gon. This red line must support a triangle on both of its sides as it is not extremal in $\arr$. This triangle must then be bichromatic as the only monochromatic triangle on one of its sides is not empty by assumption.
From now on we assume that the five triangles of the red sub-arrangements are empty. This implies that the pentagonal face of the 5-star is also empty.
We also assume that there is more than one blue pseudoline in $\arr$, otherwise Lemma~\ref{lemma:sweepingpla} gives us the existence of a bichromatic triangle.

Since there are at least two blue pseudolines, there is a blue-blue crossing.
Since no blue pseudoline intersects the triangles of the 5-star, the blue-blue crossing lies in an unbounded face of the 5-star. Then there is a red pseudoline that separates the blue crossing from the pentagon. 
We apply Lemma~\ref{lemma:sweepingpla} to this red pseudoline, for its side where the blue-blue crossing is.

There we find a triangle. If this triangle is not a triangle of the 5-star, we are done.
Otherwise, flip this triangle and apply Lemma~\ref{lemma:sweepingpla} to the same pseudoline and to the same side again.
The second triangle found has to be bichromatic. If this is a triangle also in the original arrangement, we are done.
Assume now that the second triangle was not a triangle in the original arrangement, i.e., the first flip created the second triangle. Note that the newly created triangle has to be supported by two red pseudolines that were involved in the first flip and thus the third pseudoline is blue, which cannot go through the pentagon by our earlier assumption. Consequently, the case where the first flip creates a new triangle is essentially unique and is shown in Figure~\ref{fig:5star}. Then the five triangles of the 5-star were not all empty, contradicting our earlier assumption.
\end{proof}

\begin{figure}[htbp]
  \centering
  \includegraphics[width=0.75\textwidth]{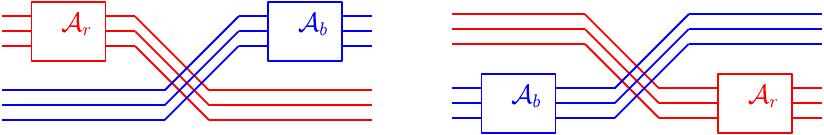}
  \caption{Both extremal arrangements with prescribed sub-arrangements}
  \label{fig:maxandmiarr}
\end{figure}

\begin{proof}[Proof of \Cref{thm:blockbicolored}]
        Given an unmarked, block-bicolored, arrangement $\arr$, we call the arrangement induced by the red pseudolines $\arr_r$ and the arrangement induced by the blue pseudolines $\arr_b$.
        We want to consider $\arr$ as a marked arrangement.
        To this end, we choose the north pole given by the definition of a block-bicolored arrangement.
        
    Consider the arrangements shown in Figure~\ref{fig:maxandmiarr}. 
    Call the arrangement to the left $\arr_{min}$, the one to the right $\arr_{max}$.
    
    In $\arr_{min}$, every triple of lines, not all of them of the same color, the pseudoline with the highest index is going below the crossing of the other two. This implies that every bichromatic triple has $-$ orientation. 
    
   Analogous, every bichromatic triple $\arr_{max}$ has orientation $+$. The remaining, monochromatic, triples in $\arr, \arr_{max}$ and $\arr_{min}$ have the same orientation. By the observation above clearly $\arr_{min} \leq \arr \leq \arr_{max}$ holds in the inclusion order. By Theorem~\ref{thm:ssioandio}, there is a sequence (possibly of length zero!) of "$-$ to $+$" triangle flips, transforming $\arr$ into $\arr_{max}$. 
    Since both arrangements agree on the monochromatic triples, the sequence only consists of bichromatic triangle flips. 
A similar statement holds for $\arr_{min}$.

Clearly $\arr_{min} \neq \arr_{max}$, so not both $\arr=\arr_{min}$ and $\arr=\arr_{max}$ can hold, so not both flip sequences can be empty. The first triangle flipped in such a sequence is a triangle of $\arr$. By the observation above, this implies at least one bichromatic triangle in $\arr$. Moreover if both $\arr \neq \arr_{min}$ and $\arr \neq \arr_{max}$, $\arr$ has two bichromatic triangles, of opposite orientation.
\end{proof}

Note that this proof gives us a sequence of arrangements which look like we shift a sub-arrangement to the left, similar to the proof for line arrangements.

In the following we will use a sweeping lemma for lenses, which was stated in \cite{radtkeetal2023}. 
\begin{lemma}[Lens Sweeping Lemma]\label{lem_parr sweep}
    Let $Q$ be a lens bounded by two curves $L$ and $R$. Assume there is a collection of curves inside $Q$ which pairwise intersect at most once and where every curve intersects $L$ and $R$ exactly once. 
    If there is a crossing inside $Q$, there is a triangle that is supported by $L$.
\end{lemma}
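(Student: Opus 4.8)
The plan is to adapt the proof of the sweeping lemma for pseudolines (Lemma~\ref{lemma:sweepingpla}) to the lens, with $L$ playing the role of the sweeping pseudoline. Since every interior curve meets $L$ and $R$ only on the boundary of $Q$, every crossing in the interior of $Q$ is a crossing of two interior curves. For such a crossing $c=\ell_i\cap\ell_j$, let $p_i,p_j$ be the unique points where $\ell_i,\ell_j$ meet $L$; as each of them meets $L$ exactly once, the arcs $\ell_i[p_i,c]$ and $\ell_j[p_j,c]$ together with the sub-arc of $L$ between $p_i$ and $p_j$ bound a curvilinear triangle $T_c\subseteq Q$ with one side on $L$. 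So to each interior crossing there is an associated triangle supported by $L$, and it suffices to find one such crossing whose triangle is empty.

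To select the right crossing I would first fix a notion of ``height above $L$''. Because the interior curves all run from $L$ to $R$ and pairwise cross at most once, the lens carries a sweep: a monotone family of arcs $\gamma_t$ joining the two tips of $Q$, with $\gamma_0=L$ and $\gamma_1=R$, each meeting every interior curve exactly once. This sweep induces a height function on $Q$ that vanishes on $L$ and increases strictly as one travels along any interior curve away from its foot on $L$. I would then take $c=\ell_i\cap\ell_j$ to be the interior crossing of minimum height and claim that the associated triangle $T_c$ is empty; any empty $T_c$ is a genuine triangular face with a side on $L$, which is what we want.

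The emptiness of $T_c$ follows from the hypotheses, which guarantee that an intruding curve $\ell_k$ meets each side of $T_c$ at most once: it crosses $\ell_i,\ell_j$ at most once by assumption and crosses $L$ exactly once in total. If the point $\ell_k\cap L$ lies on the base, then $\ell_k$ enters through the base and, unable to cross $L$ again, must leave through $\ell_i$ or $\ell_j$; the exit crossing lies on the arc between a foot and $c$, hence has strictly smaller height than $c$. If $\ell_k\cap L$ lies off the base, then $\ell_k$ enters and leaves through the two slanted sides, again producing a crossing of $\ell_k$ with $\ell_i$ or $\ell_j$ on the arc below $c$ and thus of smaller height. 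Either way we contradict the minimality of $c$, so $T_c$ is empty. The main obstacle is the single preparatory step: establishing that the required sweep exists and that height is strictly monotone along the interior curves, using only that the curves pairwise cross at most once and each meets $L$ and $R$ once. Once this is in hand, the extremal argument above is routine and completes the proof. An alternative route would be to extend $L$, $R$, and the interior arcs to a full pseudoline arrangement and invoke Lemma~\ref{lemma:sweepingpla} directly, but guaranteeing that the extension stays a pseudoline arrangement and that the triangle it returns lies inside $Q$ seems no easier than the direct argument.
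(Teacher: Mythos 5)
Since the paper does not prove this lemma itself (it imports it from \cite{radtkeetal2023}), your argument has to be judged on its own merits. Your extremal step is sound: granted a height function that vanishes on $L$ and is strictly increasing along every interior curve, the minimum-height interior crossing $c=\ell_i\cap\ell_j$ has an empty region $T_c$, by exactly your Case A/B analysis (an intruding curve cannot recross $L$ and cannot terminate inside $Q$, so it must cross $\ell_i$ or $\ell_j$ strictly below $c$). The gap is the step you label as ``preparatory'': the existence of the monotone sweep $\gamma_t$. This is not a routine preliminary; it is essentially the whole content of the lemma. Sweep theorems for arrangements (Felsner--Weil \cite{sweeps} for pseudolines, Snoeyink--Hershberger for curves) are themselves proved by repeatedly locating a triangle adjacent to the current sweep curve and flipping past it --- that is, the known route to sweepability goes \emph{through} triangle lemmas of precisely this kind, including Lemma~\ref{lemma:sweepingpla}. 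Invoking a sweep of the lens to prove the lens triangle lemma is therefore circular unless you supply an independent construction of the sweep, which you do not.

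The gap is not cosmetic, because cheaper substitutes for the height function fail exactly in your Case B. The natural replacements --- choose $c$ so that $T_c$ is inclusion-minimal, or so that the number of curves meeting the interior of $T_c$ is minimal --- do work in Case A, where the new crossing $c'$ satisfies $T_{c'}\subsetneq T_c$ and one fewer curve meets its interior. But in Case B the intruder $\ell_k$ has its foot on $L$ \emph{outside} the base, and the region associated with the new crossing $\ell_k\cap\ell_i$ lies on the far side of $\ell_i$, with interior disjoint from $T_c$; there is no containment and hence no contradiction with any region-based minimality. Strict monotonicity of height along curves is exactly what rescues Case B, and that is the property you have not established. Note that what you actually need is weaker than a full sweep: it suffices to exhibit one crossing that is the \emph{first} crossing on both curves through it, i.e., acyclicity of the relation ``precedes along a common curve'' on interior crossings. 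But even this reduced claim requires a genuine topological argument (already for three curves it amounts to showing that a ``pinwheel'' attachment of foot-arcs and head-arcs is impossible inside a lens, via the cyclic order of endpoints on $L$ and $R$), and your proposal contains none of it. Your fallback --- extending the configuration to a full pseudoline arrangement and invoking Lemma~\ref{lemma:sweepingpla} --- has the same status: extending a family of pseudosegments to a pseudoline arrangement is itself a nontrivial (and in general not always solvable) problem, as you suspected.
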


\begin{proof}[Proof of Theorem~\ref{thm:always tri or quadrangle}]
\begin{figure}[ht]
    \centering
    \includegraphics[width=0.75\linewidth]{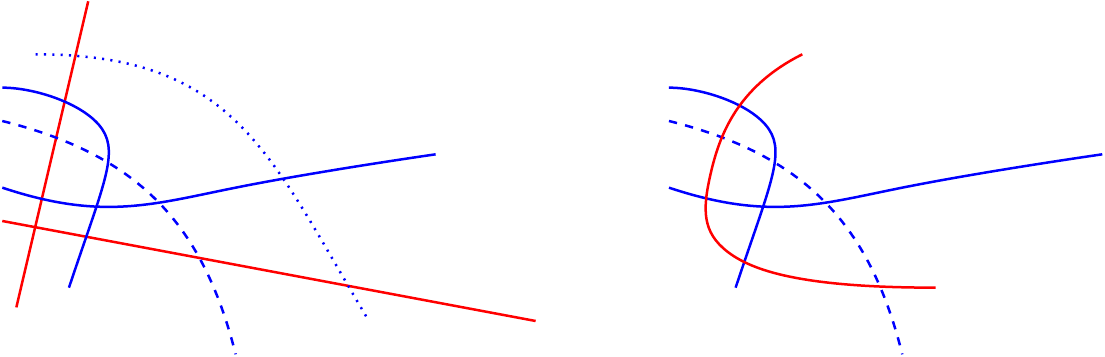}
    \caption{Illustration for the proof of \Cref{thm:always tri or quadrangle}. 
    %The dotted line is the one we first chose in the proof. 
    The dashed blue and both red lines form a lens.} \label{fig:alwaystriorquad}
\end{figure}

	In a bicolored arrangement, we consider the sub-arrangement induced by the red pseudolines and a single blue pseudoline. 
    There is a blue-red-red triangle $T$ in this subarrangement incident to the blue pseudoline, by the sweeping lemma. 
    We reintroduce the remaining blue pseudolines and consider the blue pseudolines that cross $T$. 
    If there is one that intersects both red sides  then we have a smaller blue-red-red triangle, induction.
    Otherwise all blue pseudolines intersect the blue side and one of the red sides. 
    Then taking just the part of the arrangement inside the triangle and lengthening all lines to infinity in both directions, plus joining the two red halflines that form the red-red sides of $T$ to form a single curve, we get a lens with only transversal arcs as in Lemma~\ref{lem_parr sweep}. 
    If there are no arcs going through the lens, $\ell_a,\ell_b$ and $\ell_c$ are a bichromatic triangle.
    If there is no crossing inside the lens, then we find a bichromatic triangle inside the lens, which consists of the vertex $(\ell_a,\ell_b)$ and the unique arc that sees this vertex.
    
    Otherwise we can sweep the red line of the lens towards the blue by triangle flips and hence we find a triangle formed by it and two blue pseudolines crossing the lens. 
    Then we have a red-blue-blue triangle on that side, before merging the two red lines this was either a red-blue-blue triangle or a red-red-blue-blue quadrangle, finishing the proof.
    
\end{proof}

\begin{figure}[ht]
    \centering
    \includegraphics[width=1\linewidth]{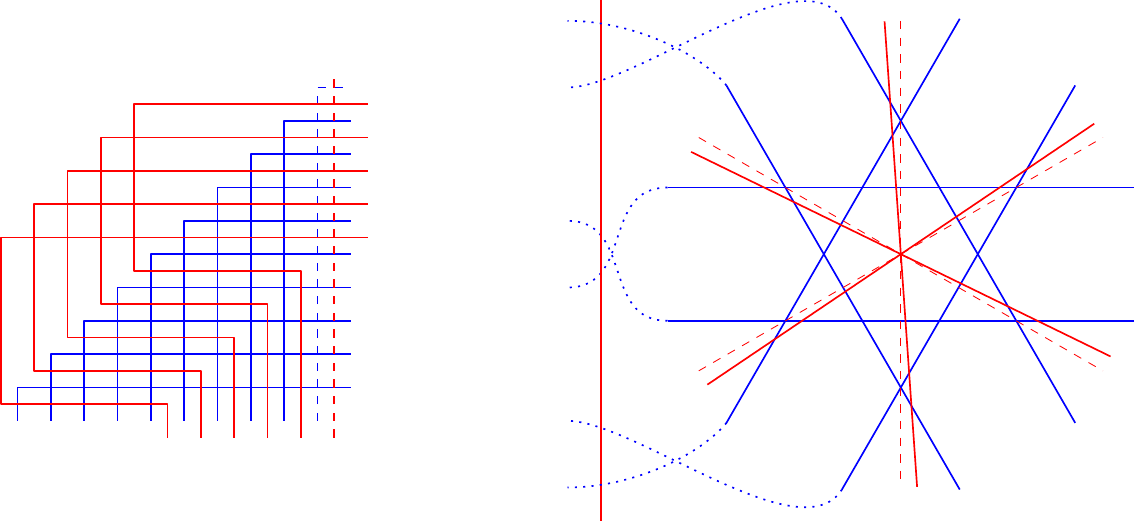}
    \caption{Left: $10$ (resp. $9$, without the dashed lines) blue pseudolines form an independent set in the pseudoline-face hypergraph of $16$ (resp. $14$) lines. Right: $6$ lines form an independent set in the line-face hypergraph of $10$ lines.}
    \label{fig:nhalf}
\end{figure}

\begin{figure}[ht]
    \centering
    \includegraphics[width=1\linewidth]{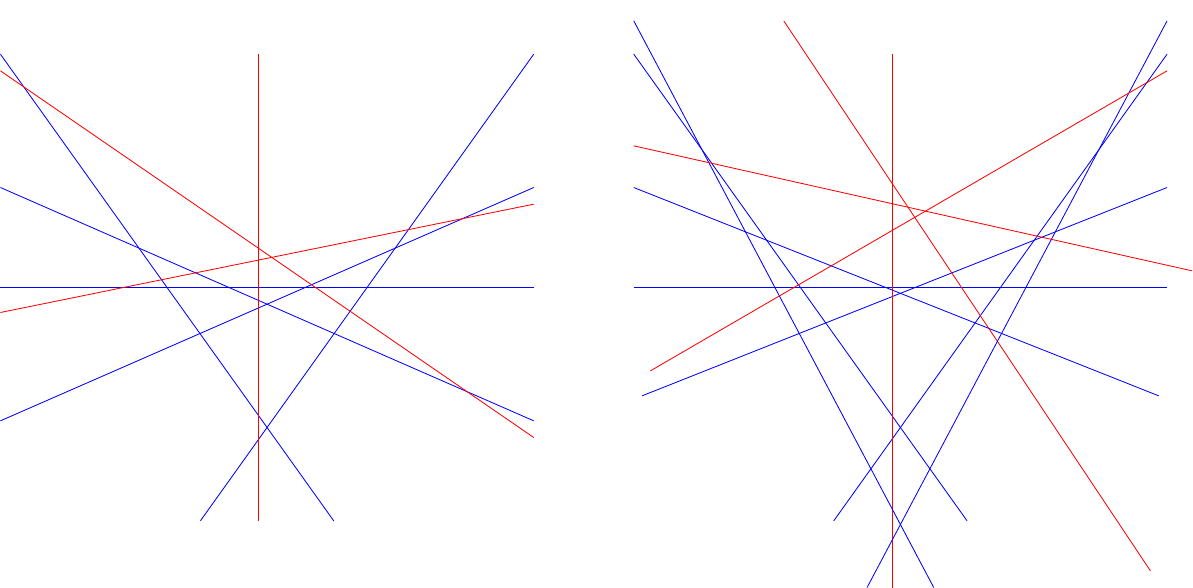}
    \caption{Left: $5$ blue lines form an independent set in the line-face hypergraph of $8$ lines. Right: $7$ blue lines form an independent set in the line-face hypergraph of $11$ lines.}
    \label{fig:nhalf5-7}
\end{figure}

\begin{figure}[ht]
    \centering
    \includegraphics[width=1\linewidth]{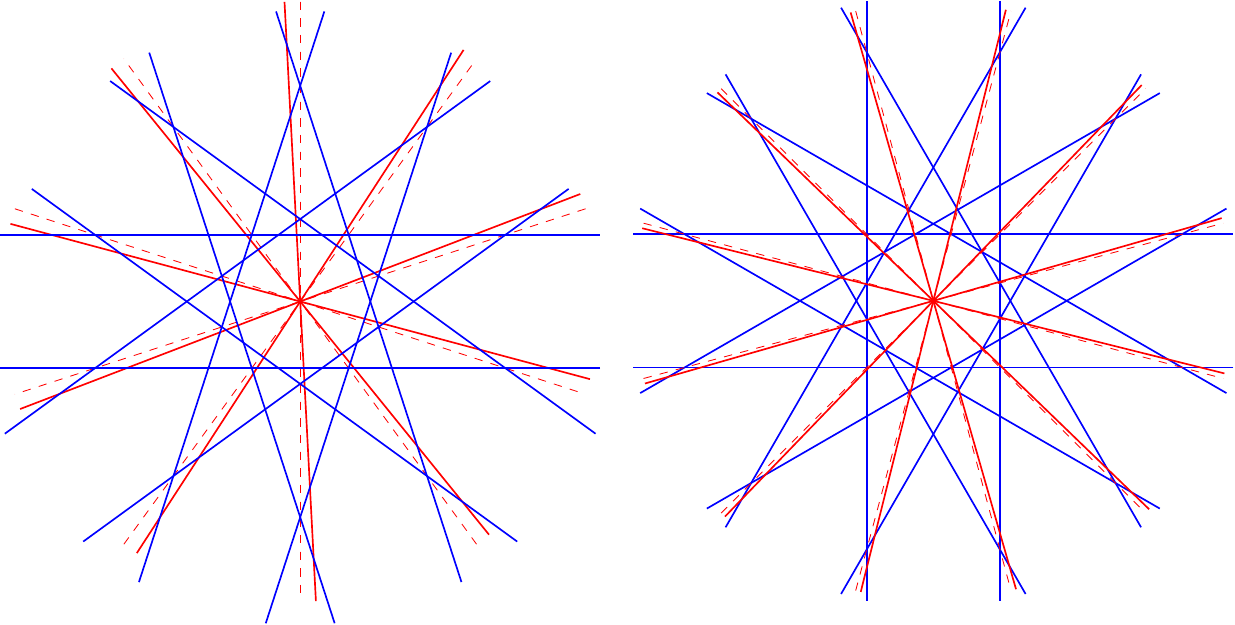}
    \caption{    Left: $10$ blue lines form an independent set in the line-face hypergraph of 15 lines, having parallel pairs of blue lines. Right: $12$ lines form an independent set in the line-face hypergraph of 18 lines, having parallel pairs of blue lines.}
    \label{fig:nhalf10-12}
\end{figure}

\begin{proof}[Proof of Theorem \ref{thm:psline-face}]
The upper bound for pseudolines follows from Theorem \ref{thm:oldLF}. For the sake of completeness and as we will afterwards strengthen the argument slightly, first we refresh their proof. Given a pseudoline arrangement $L$ of size $n$, let $s$ be the size of an independent set $I$ in the pseudoline-face hypergraph. Then the pseudolines of $I$ form $s(s+1)/2+1$ cells. Each pseudoline in $J=L\setminus I$ intersects $s+1$ such cells. Note that $|J|=n-s$. As $I$ is an independent set, each cell must be intersected at least once. Therefore, as $s$ is an integer, $$(n-s)(s+1)\ge s(s+1)/2+1,$$
$$n\ge \frac{3}{2}s+1/(s+1),$$
$$s<\frac{2}{3}n,$$
$$s\le \lceil \frac{2}{3}n-1\rceil.$$

\begin{lemma}\label{lem:oneface}
    If $n=3k-1$ and in a set of $n$ pseudolines $L$ a subset of $2k-1$ (blue) pseudolines forms an independent set $I$, then there must be a cell $C$ in the subarrangement of the blue pseudolines $I$, such that $C$ is unbounded, every (red) pseudoline in $J=L\setminus I$ goes through $C$ and for every other cell of $J$ there is exactly one pseudoline of $J$ that goes through it.
\end{lemma}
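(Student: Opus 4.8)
Here is the plan.

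The double-counting already recalled supplies the numerical skeleton. With $s=2k-1$ and $|J|=n-s=k$, the blue subarrangement has $\frac{s(s+1)}{2}+1=2k^2-k+1$ cells, each red meets $s+1=2k$ of them, so the number of (red, blue-cell) incidences is exactly $2k^2$. The independence of $I$ means precisely that no blue cell survives uncrossed, since an uncrossed blue cell would be a face of $L$ all of whose supporting pseudolines are blue; hence every blue cell is met at least once. Writing $d_F$ for the number of reds through a blue cell $F$, I thus have $d_F\ge 1$ for all $F$ and $\sum_F d_F=2k^2$, i.e. $\sum_F(d_F-1)=k-1$.

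The heart of the argument, and the step I would single out as decisive, is to bring in the red--red crossings to promote this counting into structure. Each of the $\binom{k}{2}$ crossings of two red pseudolines lies in exactly one blue cell (it avoids every blue line), and a cell met by $d_F$ reds can contain at most $\binom{d_F}{2}$ such crossings; hence $\binom{k}{2}=\sum_F(\text{red crossings in }F)\le\sum_F\binom{d_F}{2}$. On the other hand, since $x\mapsto\binom{x}{2}$ is convex and $d_F\le k$, subject to $\sum_F(d_F-1)=k-1$ the sum $\sum_F\binom{d_F}{2}$ is maximised by putting all the excess on one cell, so $\sum_F\binom{d_F}{2}\le\binom{k}{2}$, with equality only when one cell has $d_F=k$ and every other has $d_F=1$. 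Sandwiching forces equality throughout, yielding a unique blue cell $C$ met by all $k$ reds, with every other blue cell met by exactly one red, and (from the equality in the crossing bound) all $\binom{k}{2}$ red crossings occurring inside $C$. This already gives two of the three assertions.

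It remains to prove that $C$ is unbounded, which I expect to be the main obstacle and which I would settle by contradiction from the boundary of $C$. Suppose $C$ is bounded; then it is a $t$-gon of the blue arrangement whose $t$ edges lie on distinct blue pseudolines (a pseudoline meets the closed cell in a connected arc, hence in at most one edge), so $t\le s=2k-1$, and its $t$ neighbours across these edges are $t$ distinct cells different from $C$ (the neighbour across an edge on a blue line $b$ differs from $C$ only in the side of $b$). Each red meets $C$ in a single arc: $r\cap C=\bigcap_b (r\cap H_b)$ is an intersection of half-lines of $r$, hence one interval, so $r$ crosses $\partial C$ exactly twice, on two distinct edges, and therefore occupies the two distinct neighbours across those edges.

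Since each neighbour of $C$ is a cell other than $C$ it contains exactly one red, so no two reds can cross the same edge of $C$ (that would place two reds in one neighbour); thus the $k$ reds use $2k$ distinct edges, forcing $t\ge 2k$ and contradicting $t\le 2k-1$. Hence $C$ is unbounded, which completes the three claims. The only places that need care are the facts that each red meets $C$ in a single arc and that distinct edges of $C$ have distinct neighbouring cells, both of which follow from the pseudo-convexity of cells; the convexity inequality of the second paragraph is where the argument really turns, and the edge count of the last step is the clean punchline.
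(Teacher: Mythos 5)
Your proof is correct, and it shares the paper's overall skeleton (a tight counting argument whose equality case forces the structure, followed by an edge count to rule out a bounded $C$), but the decisive middle step is carried out by a genuinely different mechanism. The paper refines the incidence count \emph{sequentially}: inserting the red lines one by one, every red after the first wastes at least one blue cell, namely a cell containing its crossing with an earlier red, which upgrades the bound to $(n-s)s+1\ge s(s+1)/2+1$; for $n=3k-1$, $s=2k-1$ this is an equality, and since the ordering is arbitrary ("any line can be last") all red--red crossings are forced into a single blue cell $C$, with "no further loss" giving that every other cell is met exactly once. You instead argue \emph{globally}: each blue cell $F$ met by $d_F$ reds contains at most $\binom{d_F}{2}$ of the $\binom{k}{2}$ red--red crossings, while the constraint $\sum_F(d_F-1)=k-1$ and the convexity (superadditivity) of $x\mapsto\binom{x}{2}$ cap $\sum_F\binom{d_F}{2}$ at $\binom{k}{2}$, with equality only when one cell absorbs all the excess. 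Both routes then conclude unboundedness the same way: a bounded cell of the blue arrangement has at most $2k-1$ edges on distinct blue lines, and the $k$ reds must cross $2k$ pairwise distinct edges, since two reds on one edge would put two reds into a neighbouring cell of degree one. What your version buys is symmetry and a transparent equality analysis (it also yields explicitly that all red crossings lie in $C$, which the paper obtains as the intermediate step rather than a consequence); what the paper's version buys is elementarity, avoiding any extremal/convexity input. The two facts you flag as needing care---that a red meets each blue cell in a single arc, and that distinct edges of a cell have distinct neighbouring cells---are pseudo-convexity properties of pseudoline arrangements that the paper uses implicitly as well (e.g.\ when asserting that each red meets exactly $s+1$ blue cells), so your treatment is at the same level of rigour.
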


\begin{proof}
For examples for $k=5$ see the left side of Figure \ref{fig:nhalf} without the dashed lines and for $k=3,4$ see Figure \ref{fig:nhalf5-7}. The cases $k=1,2$ can also be easily drawn with straight lines.

First we slightly improve the above computation for $n=3k-1$. Assume we have a set of lines $L$ of size $n=3k-1$ and an independent subset of (blue) lines $I$ of size $s=2k-1$. Take the (red) lines in $J=L\setminus I$ in any order and note that each line in $J$ except for the first one intersects the previous lines in some cells (at least one!) of the arrangement of $I$ and so in these cells it actually does not cross a new cell. Therefore instead of the above inequality we can write the following stronger one: $$(s+1)+(n-s-1)s=(n-s)s+1\ge s(s+1)/2+1.$$
Now using that $n=3k-1$ and $s=2k-1$ we get $$k(2k-1)+1\ge (2k-1)(2k)/2+1.$$
As the above is actually an equality, it implies that there can be no further loss, in particular the last line intersects all previous ones in the same cell of $I$. This is true for any order of the lines, in particular any line can be last, which implies that every pair of lines in $J$ must actually intersect in the same cell $C$ of the arrangement of $I$. $C$ must be an unbounded cell, as otherwise it would have at most $2k-1$ sides, and each of the $k$ lines in $J$ would need to intersect two different ones (otherwise they meet in another cell, which would be a further loss), which is impossible. Also as there can be no further loss, every other cell is met by exactly one line of $J$.
\end{proof}

Now we give lower bound constructions. We start with the case of pseudolines. That is, for every $n$ we need a set of pseudolines $L$ in which there is a subfamily $L'$ of lines of size $\lceil \frac{2}{3}n-1\rceil$ that form an independent set in the pseudoline-face hypergraph. See Figure \ref{fig:nhalf} (left) for an illustration. Let $k\ge 1$ integer. First we take $2k$ (blue) pseudolines forming a staircase (their arrangement is actually the same as for lines in convex position). Then we take $k+1$ (red) lines as in Figure \ref{fig:nhalf} (left). Observe that every face of the arrangement of blue pseudolines is crossed by some red pseudoline. Thus in the arrangement of blue and red pseudolines there is no face which is bounded only by blue pseudolines.
This implies that in this construction of $3k+1$ pseudolines the $2k$ blue pseudolines form an independent set in the pseudoline-face hypergraph. Notice that $2k=\lceil \frac{2}{3}(3k+1)-1\rceil$, as required. 

By deleting one blue line (say, the dashed one on the Figure) we get a construction with $3k$ pseudolines and $2k-1$ blue pseudolines, and notice that $2k-1=\lceil \frac{2}{3}(3k)-1\rceil$.

Finally by deleting the rightmost blue line and the rightmost red line (in the order as their bottom vertical parts cross the $x$-axis, the dashed lines on the figure) we get a construction with $3k-1$ pseudolines and $2k-1$ blue pseudolines, and notice that $2k-1=\lceil \frac{2}{3}(3k-1)-1\rceil$. Notice also that as guaranteed by Lemma \ref{lem:oneface} every red-red intersection happens in the same face of the blue arrangement.
Altogether, we got constructions for every value of $n$. 

Now we give constructions with straight lines. Consider the regular $2k$-gon. After extending all its sides to lines, we get a line arrangement of $2k$ lines. 
Note that the pairs of lines arising from opposite sides of the $2k$-gon intersect at infinity. Except for the central face of size $2k$, every other face has at most $4$ lines on its boundary.

We now add $k$ red lines, which go through opposite vertices of the star formed by the $2k$ blue lines (for $k$ even, these are the $k$ diagonals of the $2k$-gon; for $k$ odd, these lines cut into half the edges of the $2k$-gon, see Figure \ref{fig:nhalf10-12}). 
Every blue quadrangle is formed by two pairs of lines, where every pair is consecutive along the $2k$-gon.
By the symmetries of the construction, this implies that every quadrangle formed by blue lines has exactly two vertices, which lie on a red line. The blue triangles also have either two vertices met by a red line or their interior is met by  a red line. Also the two-faces are all at a vertex of the $2k$-star and so have a red line intersecting them in their inside.
Thus, after rotating the red lines slightly around their intersection point, all faces of the blue line arrangement are hit by a red line in their interior. We can also slightly translate the red lines so that there are no $3$ lines intersecting in the same point. However, we have parallel pairs of blue lines that do not intersect yet.
Thus, from every pair of parallel blue lines, we rotate one slightly, such that the resulting intersection has small (i.e., large negative) $x$-coordinate. Now add a vertical red line, such that all blue-blue crossings are to the right of it. See \ref{fig:nhalf} (right). Thus this new red line crosses every face introduced when parallel blue lines were made to cross. This construction has $3k+1$ lines out of which $2k$ form an independent set. Deleting one or two blue lines gives a construction for $3k$ or $3k-1$, respectively, thus we got constructions for every value of $n$. 

As noted this gives only $2k-2$ independent lines when $n=3k-1$, one less than the upper bound. For small values $k=1,2,3,4$ we can get matching constructions, see Figure \ref{fig:nhalf5-7} for $k=3,4$.
\end{proof}

\begin{proof}[Proof of Theorem \ref{thm:line-triangle}]
We begin with a lower bound by constructing a bicolored pseudoline arrangement $\arr$ with $n$ blue and $2^n$ red lines not containing any monochromatic red triangles. Then the set of red lines is independent in $H_\Delta(\arr)$. More precisely we show the following statement by induction. There are two pseudoline arrangements, one partial and one proper with $n$ blue and $2^n$ red lines not containing a monochromatic triangle such that the set of red and blue lines cross transversely, i.e. along a simple closed curve which encloses all crossings we see the color of the intersecting line change only 4 times and the blue cross in one of the two arrangements (making the arrangement proper) while there are no blue crossings in the other (making the arrangement partial). This is clearly the case for 1 blue and 1 red line giving the induction start. In this case the two arrangements are equal. Assume the statements holds for some $n$. 

    Place the construction with blue crossings on top and the one without blue crossings on the bottom as a block each and match the blue lines. Cross the red lines of the two blocks as can be seen in \Cref{fig:nomonchromtriangles}. The only monochromatic triangles formed are between the two blocks and the new red crossings and they are all red. Add a new blue line crossing (and hence destroying) all these triangles and then make it cross all the blue lines under everything. It is clear that this does not introduce any blue triangles as the bottom block does not contain blue crossings. The construction for the case that there are no blue crossings works in the same way, only that we use itself as a block twice and make no new blue crossings below. This finishes the construction.

\begin{figure}[t]
    \centering
    \includegraphics[width=0.5\linewidth]{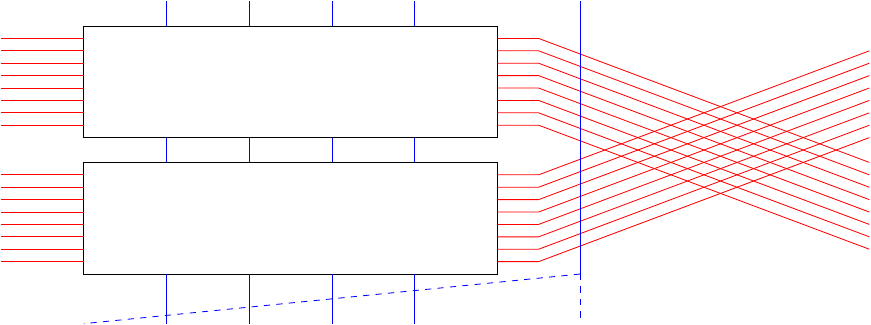}
    \caption{
    The bottom block has no blue crossings and the top block either has all blue crossings or no blue crossings. The extra blue line either crosses all other blue lines below, or it crosses none.}
    \label{fig:nomonchromtriangles}
\end{figure}

\begin{figure}
\begin{subfigure}{.3\textwidth}
  \centering
  \includegraphics[width=.8\linewidth]{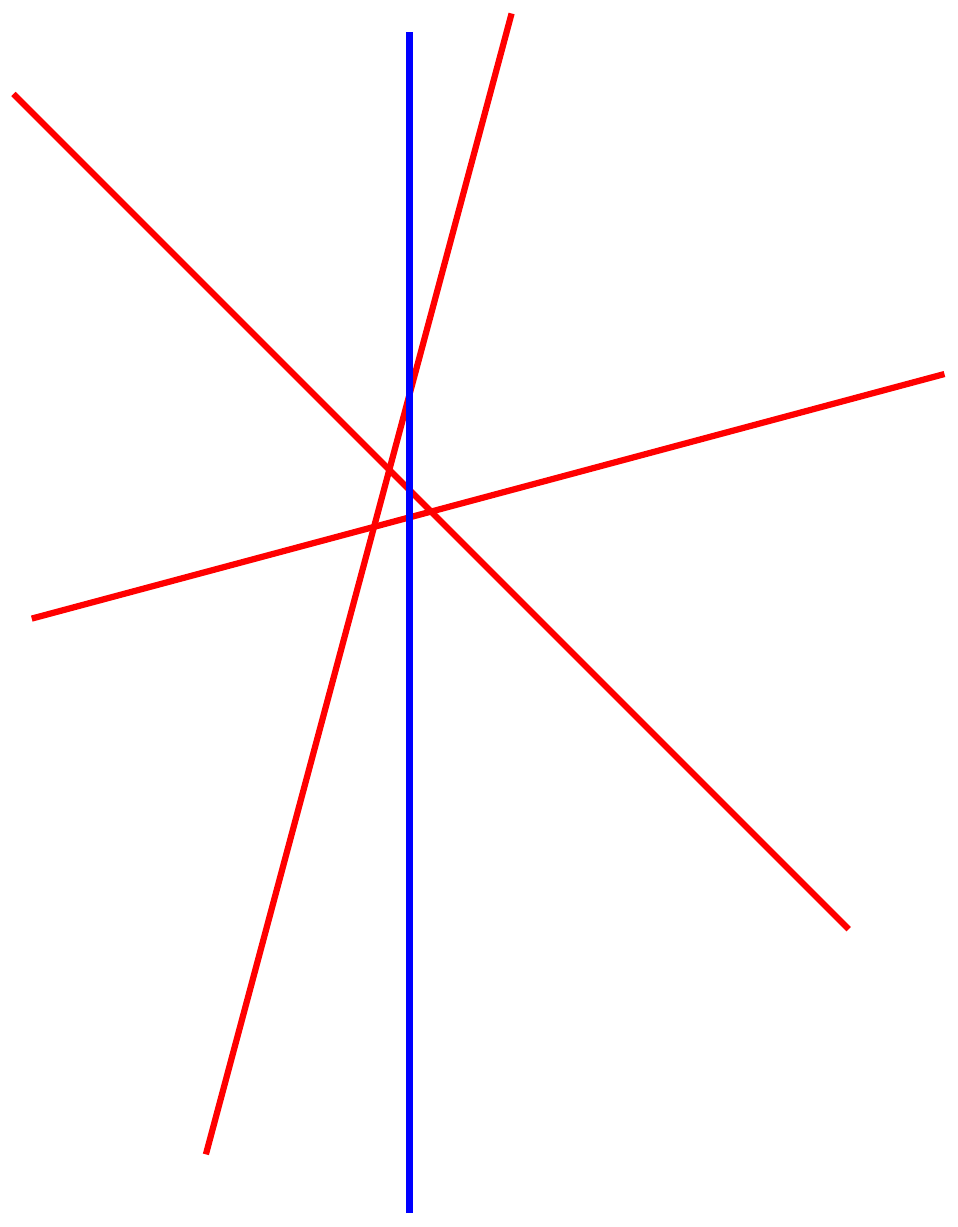}
  \label{fig:sfig1}
\end{subfigure}%
\begin{subfigure}{.3\textwidth}
  \centering
  \includegraphics[width=.8\linewidth]{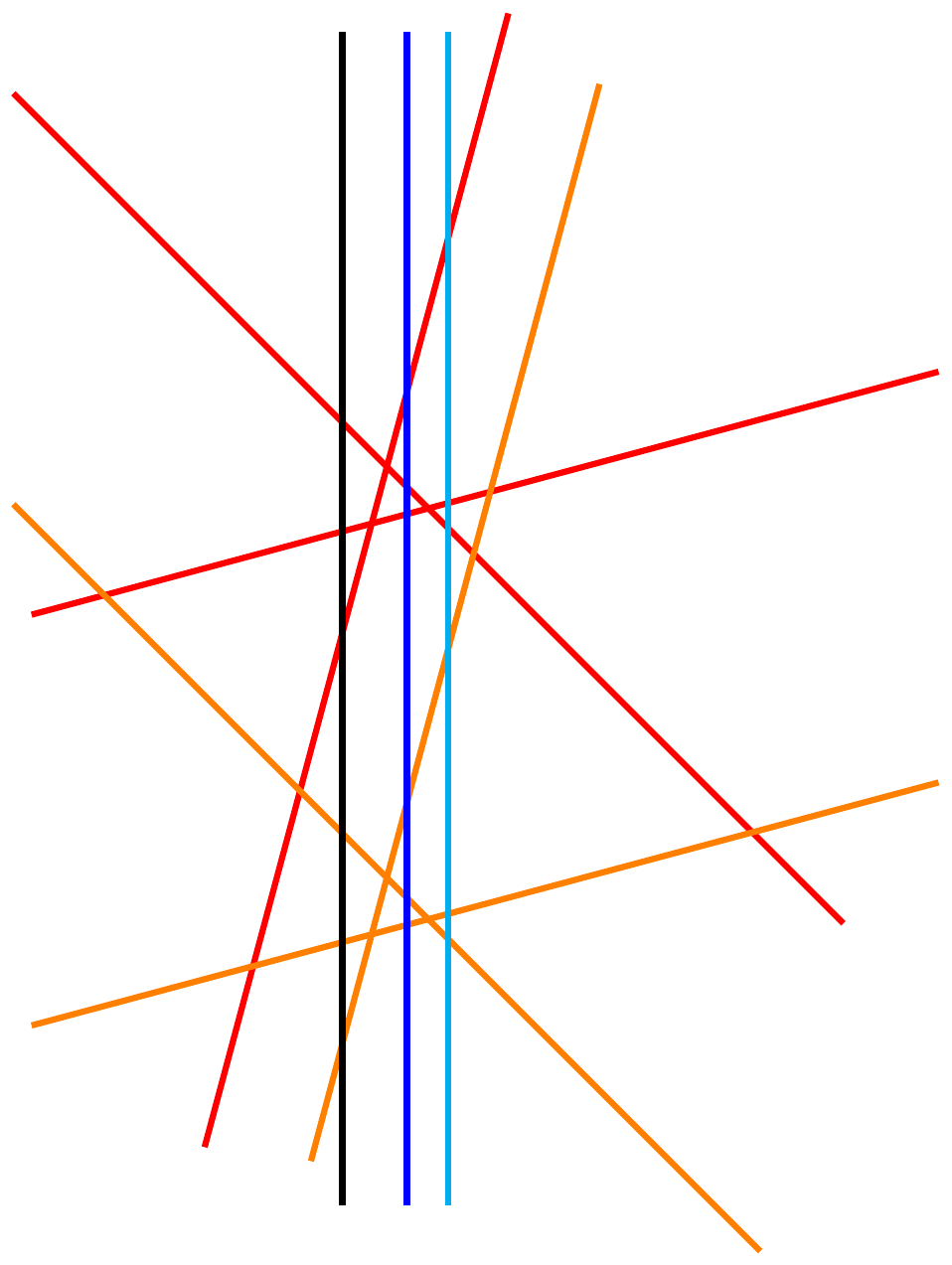}
  \label{fig:sfig2}
\end{subfigure}
\begin{subfigure}{.3\textwidth}
  \centering
  \includegraphics[height=5cm,width=.8\linewidth]{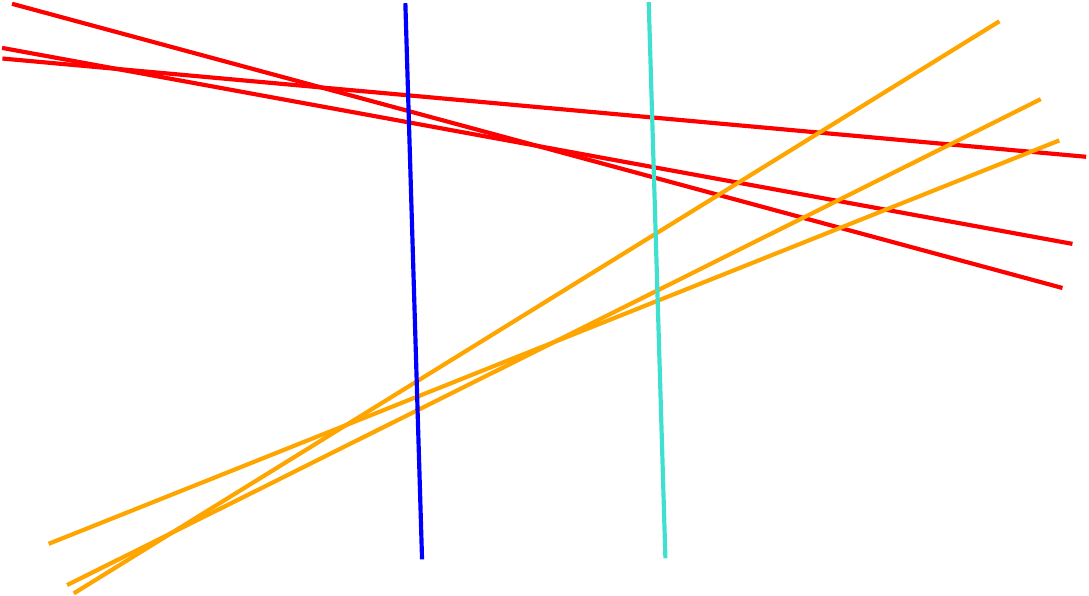}
  \label{fig:sfig3}
\end{subfigure}
\caption{The first arrangement is the induction basis.
In the second arrangement, the orange lines are a translated copy of the red lines and the cyan and black lines are parallel to the blue line. We obtain the arrangement to the right by moving the black line to infinity.}
\label{fig:real_non_mon_chrom}
\end{figure}

    Now we aim to make a similar arrangement with straight lines that avoids monochromatic red triangles.
    We argue that the construction without blue-blue crossings is realizable by a line arrangement. 
    For this, assume by induction, that there is a realization by lines, where, without loss of generality, all blue lines are vertical. 
    Take a copy of the red line arrangement and shift it vertically downwards, along the blue lines.
    We will consider the x coordinates of the red-red crossings. 
    Let $A$ be the set of crossings by a pair of red lines which do not belong to the same copy of the red subarrangement. 
    Conversely, let $B$ be the set of the red-red crossings, where both lines belong to the same copy of the red subarrangement. 
    Note that the x-coordinates of the points in $B$ are independent of the shift of the copy. 
    By shifting far enough, we can assume that for no $(a_1,a_2),(b_1,b_2) \in B$ and $(c_1,c_2)\in A$, it holds that $a_1<c_1<b_1$.
    We insert two new vertical lines, such that $B$ is between them and no point of $A$ is between them. 
    Of these two lines, we color the right one blue. 
    We send the left one by a projective transformation to infinity. Note that no red lines are parallel anymore.
    This line arrangement is now isomorphic to the next inductive step of the construction without blue-blue crossings. For a sketch, see Figure~\ref{fig:real_non_mon_chrom}.
    To obtain a line arrangement without parallel lines, we slightly perturb the slope of the blue lines.
    This construction implies that Theorem \ref{thm:line-triangle} holds for line-triangle hypergraphs, since we constructed a bicolored line arrangement without a red monochromatic triangle, with $2^n$ red and $n$ blue lines.

    We proceed with the upper bound by showing that every pseudoline arrangement with  $2^n$ red and less than $\frac{cn-2}{2}$ blue lines contains a monochromatic red triangle. Let $\arr$ be an arrangement of $2^n$ red pseudolines. Then, by Erd\H os-Szekeres for the point-line dual $\arr$ contains a subset of $cn$ red lines forming the cyclic arrangement. As every triangle in a pseudoline arrangement contains an empty triangle, the $cn-2$ triangles of this cyclic arrangement contain empty red triangles of $\arr$. Note that any pseudoline added to a cyclic arrangement intersects at most two of its empty triangles. Therefore, any set of $\frac{cn-2}{2}-1$ blue pseudolines added to $\arr$ leave an arrangement where one of the empty red triangles found before remains empty and monochromatic, finishing the proof.
\end{proof}

\section{Discussion}

The conjecture about the existence of a bichromatic triangle is solved if there are at most $5$ red pseudolines. One particular case of $6$ red pseudolines  which we could not solve in general (that is, for any addition of blue pseudolines) and find especially interesting is depicted in Figure \ref{fig:6lines}. Note that in any extension with blue pseudolines, as every red pseudoline must have an incident triangle on both sides by Lemma \ref{lemma:sweepingpla}, either one of them is bichromatic and we are done, or no gray triangles on the figure can be crossed by a blue pseudoline.

\begin{figure}[hbt!]
  \centering
  \includegraphics[width=0.4\textwidth]{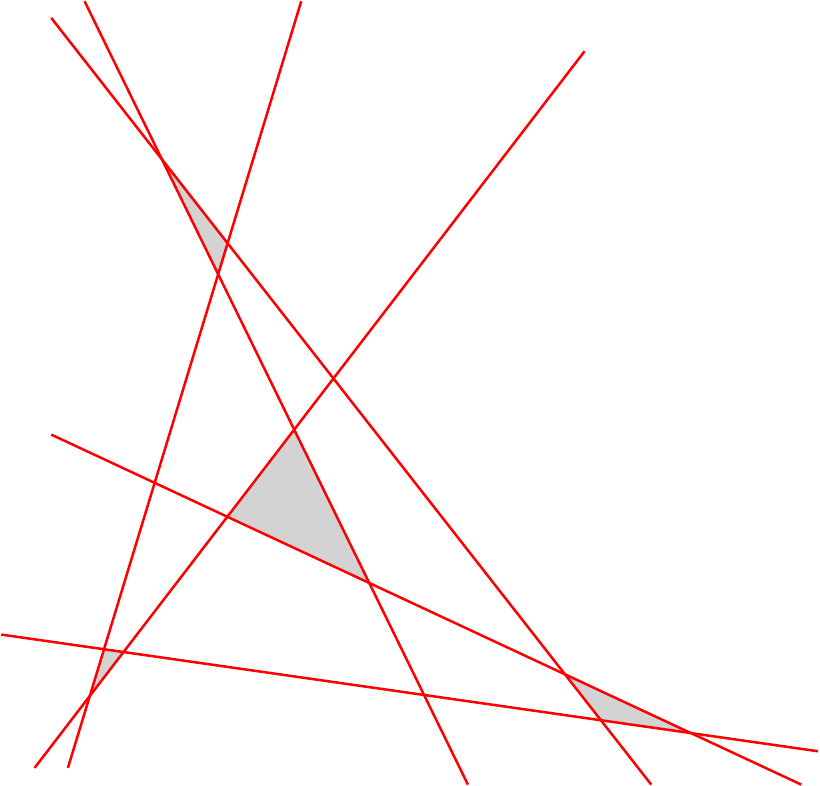}
  \caption{Six red pseudolines to which we need to add several blue pseudolines.}
  \label{fig:6lines}
\end{figure}

The bounds regarding (pseudo)line-face hypergraphs and (pseudo)line-triangle hypergraphs  still have large gaps except for $\max\alpha$, which is handled by Theorem \ref{thm:psline-face} and Theorem \ref{thm:line-triangle}.
Concerning the slight gap in Theorem \ref{thm:psline-face}, it is likely that  for straight lines we have the same answer as for pseudolines, that is, $\max\limits_{|\arr|=n}\alpha(H_{line-face} (\arr))=\lceil \frac{2}{3}n-1\rceil$? We suspect that with more effort our constructions for $n=8,11$ (see Figure \ref{fig:nhalf5-7}) could be generalized to any $n=3k-1$ yet we leave this gap of $1$ in our result as an open problem.

\begin{figure}[hbt!]
  \centering
  \includegraphics[width=0.2\textwidth]{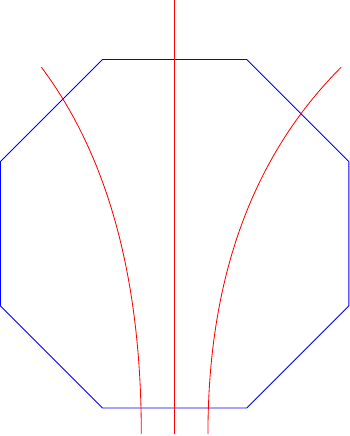}
  \caption{$3$ pseudolines cut an $8$-gon into pentagons.}
  \label{fig:kgon}
\end{figure}

Towards proving Theorem \ref{thm:always tri or quadrangle} we tried to but could not show the following statement which we think is interesting in its own too and conjecture to be true.
\begin{conj}
   Given a blue $k$-gon $P$ with at least $k-4$ red pseudolines passing through, there exists a bichromatic triangle or quadrangle on its inside boundary.
\end{conj}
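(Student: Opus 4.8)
The plan is to view each red pseudoline as a chord of the blue $k$-gon $P$, i.e.\ an arc entering and leaving $P$ through two of its blue sides, and to argue by contradiction, writing $r$ for the number of red chords (so $r\ge k-4$ by hypothesis). Since the only blue edges inside $\overline{P}$ lie on $\partial P$, every face of the subdivision of $P$ that does \emph{not} touch the boundary is bounded only by red arcs and hence is monochromatic; consequently every bichromatic face touches $\partial P$. Assuming there is no bichromatic triangle or quadrangle on the inside boundary, every boundary face then has at least $5$ sides, and the goal becomes to show that this forces $r\le k-5$, contradicting $r\ge k-4$.

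First I would run the Euler count. Writing $x$ for the number of red--red crossings strictly inside $P$, the subdivision has $k+2r+x$ vertices ($k$ corners, $2r$ chord endpoints on $\partial P$, and $x$ interior crossings) and $k+3r+2x$ edges ($k+2r$ boundary arcs and $r+2x$ red segments), so it has $N=r+x+1$ bounded faces. The cleanest case is $x=0$: then no interior face exists, all $N=r+1$ faces touch $\partial P$, and summing edges over faces gives
\[
k+4r \;=\; 2(\text{interior edges})+(\text{boundary edges}) \;\ge\; 5N \;=\; 5(r+1),
\]
whence $r\le k-5$. This settles the crossing-free case, and it is tight: \Cref{fig:kgon} is exactly an $8$-gon cut by $3=k-5$ pairwise non-crossing red chords into pentagons.

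The hard part is the presence of interior crossings. When $x>0$ the monochromatic (red) interior faces absorb red segments, and carrying the same bookkeeping only yields the weaker bound $r\le k-5-x+2f_{\mathrm{int}}$, where $f_{\mathrm{int}}$ is the number of interior faces; since $f_{\mathrm{int}}$ can grow like $x$, this is useless once many red chords cross deep inside $P$. To repair this I would pass from the global count to a local argument at the corners: if some red chord separates a single corner of $P$ from the rest, the cut-off region is a blue--blue--red triangle and we are done, and two chords isolating a corner give a blue--blue--red--red quadrangle. Assuming no corner is cut off in this way, I would build a lens from the two blue edges incident to a corner and apply the Lens Sweeping Lemma (\Cref{lem_parr sweep}) exactly as in the proof of \Cref{thm:always tri or quadrangle}, sweeping the red chords toward the corner to produce a low-complexity bichromatic face. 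The main obstacle, and the reason the statement is only conjectural, is controlling these interior red--red crossings simultaneously at all $k$ corners: one must rule out that the red arrangement hides its complexity in the interior while keeping every boundary face a pentagon or larger, and neither the global discharging nor a single local sweep appears to close this gap on its own.
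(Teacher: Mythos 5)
There is no proof of this statement in the paper to compare against: the authors explicitly state that they \emph{tried but could not prove} it and leave it as a conjecture, so anything short of a complete argument cannot be checked off against the paper, and your proposal is, by your own admission, not complete. What you do have is correct as far as it goes: the Euler count for the crossing-free case is right ($V=k+2r+x$, $E=k+3r+2x$, hence $r+x+1$ bounded faces; with $x=0$ every face meets $\partial P$, is bichromatic, and the degree sum $k+4r\ge 5(r+1)$ gives $r\le k-5$), and you correctly identify that \Cref{fig:kgon} shows tightness of $k-4$ in exactly this regime. You are also right that the same count with $x>0$ only yields $r\le k-5-x+2f_{\mathrm{int}}$, which is vacuous once the red chords form many interior cells.

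The genuine gap is the entire case $x>0$, and the repair you sketch does not go through as stated. The two blue edges incident to a corner of $P$ form a wedge, not a lens, and the hypotheses of \Cref{lem_parr sweep} fail there: a red chord entering the neighborhood of a corner need not cross \emph{both} incident blue edges, and need not cross each of them exactly once, so the Lens Sweeping Lemma cannot be invoked the way it is in the proof of \Cref{thm:always tri or quadrangle} (where the lens was manufactured by merging two red halflines of a genuine triangle, and the transversality condition was verified by a case split on how blue pseudolines cross that triangle). Moreover, even where a sweep applies, it produces a triangle of a \emph{modified} arrangement, and translating that back into a bichromatic triangle or quadrangle of the original arrangement lying on the inside boundary of $P$ is precisely the step that needs a new idea; nothing in the proposal controls where the swept triangle lands. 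So the proposal proves a special case (pairwise non-crossing red chords) of an open conjecture, and the main difficulty---red chords hiding their crossings in monochromatic interior cells while every boundary face stays a pentagon or larger---remains untouched, exactly as the paper's authors report.
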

It is easy to see that $k-4$ is tight, as one can take $k-5$ red pseudolines having no intersection inside the $k$-gon and partitioning the $k$-gon into pentagons, thus there are no triangles nor quadrangles at all, let alone bichromatic ones, see Figure \ref{fig:kgon}.

\vspace{1cm} 

\noindent\textbf{Acknowledgement}

We thank the organizers of the Order and Geometry Workshop 2024 and Stefan Felsner for helpful discussions. We thank Gábor Tardos for valuable discussions that eventually lead to Theorem \ref{thm:psline-face}.

%\newpage
\bibliography{sources.bib}
\end{document}